\documentclass{amsart}

\makeatletter
 \def\LaTeX{\leavevmode L\raise.42ex
   \hbox{\kern-.3em\size{\sf@size}{0pt}\selectfont A}\kern-.15em\TeX}
\makeatother

\newcommand{\BibTeX}{{\rm B\kern-.05em{\sc
i\kern-.025emb}\kern-.08em\TeX}}
\newtheorem{col}{Corollary}[section]
\newtheorem{thm}{Theorem}[section]
\newtheorem{lem}[thm]{Lemma}

\theoremstyle{definition}

\numberwithin{equation}{section}

\begin{document}

\title[  $n$-widths and approximations  on Riemannian  Manifolds]
{  $n$-widths   and Approximation theory on Compact Riemannian Manifolds}

\maketitle
\begin{center}

\author {Daryl Geller}
\footnote{Department of Mathematics, Stony Brook University, Stony Brook, NY 11794-3651; (12/26/1950-01/27/2011)}

\author{Isaac Z. Pesenson }\footnote{ Department of Mathematics, Temple University,
 Philadelphia,
PA 19122; pesenson@temple.edu. The author was supported in
part by the National Geospatial-Intelligence Agency University
Research Initiative (NURI), grant HM1582-08-1-0019. }

\end{center}

\begin{abstract}
We determine upper asymptotic estimates of  Kolmogorov and linear  $n$-widths of unit balls in Sobolev  and Besov norms in $L_{p}$-spaces on 
compact Riemannian 
manifolds. 
The proofs   rely  on  estimates for the near-diagonal localization of the kernels of  elliptic operators. We also summarize some of our previous results about approximations by eigenfunctions of elliptic operators on manifolds.
\end{abstract}

 {\bf Keywords and phrases:}{ Compact manifold,  Laplace-Beltrami operator, kernels, Sobolev space, Besov space, eigenfunctions, polynomials, best approximation, $n$-widths.}

 {\bf Subject classifications}[2000]{ 43A85; 42C40; 41A17;
 41A10}


\section{Introduction and the main results}

 %
Daryl Geller and I started to work  on this paper during the Summer of 2010. Sadly, Daryl Geller passed away suddenly in 
January of 2011. I will always remember him as a good friend and a wonderful 
mathematician. 

\bigskip

Approximation theory on compact manifolds is an old  subject \cite{S}, \cite{Sob}, \cite{R}, \cite{Tay}, \cite{Pes79}-\cite{Pes90}, \cite{Ka1}, \cite{Ka2}. However it attracted  considerable interest during last years  \cite{brdai}-\cite{FGS98} due to numerous applications of function theory on $S^{2},\>S^{3},$ and $SO(3)$ to  seismology, weather prediction, astrophysics, texture analysis, signal analysis,  computer vision, computerized tomography, neuroscience, and statistics \cite{BHS}, \cite{FGS98}, \cite{MP}, \cite{P}.

In the classical approximation theory of functions on Euclidean spaces the so called Kolmogorov width $d_{n}$ and linear width $\delta_{n}$ are of primary importance. 
The width $d_{n}$ characterizes the best approximative possibilities by approximations by $n$-dimensional subspaces, the width $\delta_{n}$ characterizes the best approximative possibilities  of any $n$-dimensional linear method. 
The width $d_{n}$ was introduced by A.N.\ Kolmogorov  in \cite{Kol} and $\delta _{n}$ was introduced by V.M.\ Tikhomirov in \cite{Tikh}.

The goal of the paper is of two fold. We  determine asymptotic estimates of  Kolmogorov and linear $n$-widths of unit balls in
Sobolev  and Besov norms in $L_{p}({\bf M})$-spaces on a  compact Riemannian 
manifold ${\bf M}$ and we give a brief account of our previous results about approximations by eigenfunctions of elliptic operators on manifolds.

Let us recall \cite{LGM} that for a given subset $H$ of a normed linear space $Y$, the Kolmogorov $n$-width
$d_{n}(H,Y)$ is defined as
$$
d_{n}(H,Y)=\inf_{Z_{n}}\sup_{x\in H}\inf_{z\in Z_{n}}\|x-z\|_{Y}
$$
where $Z_{n}$ runs over all $n$-dimensional subspaces of $Y$. The linear $n$-width $\delta_{n}(H,Y)$  is defined as 
$$
\delta_{n}(H,Y)=\inf _{A_{n}}\sup_{x\in H}\|x-A_{n}x\|_{Y}
$$
where $A_{n}$ runs over all bounded operators $A_{n}: Y\rightarrow Y$ whose range has dimension $n$.
In our paper 
 the notation $S_{n}$ will be used for either $d_{n},$ or $\delta_{n}$.

One  has the following relation (see \cite{LGM}, pp. 400-403,):
\begin{equation}
\label{pupqdn2}
S_n(H, Y) \leq S_n(H, Y_{1}), \>\>H\subset Y_{1}\subset Y, 
\end{equation}
where  $Y_{1}$ is  a subspace of $ Y$.

If $\gamma \in \bf R$, we write $S_n(H, Y) \ll n^{\gamma}$ to mean that one has the upper estimate $S_n(H, Y) \leq Cn^{\gamma}$ for $n > 0$
where  $C$ is independent  of $n$. 
Let  $L_{q}=L_{q}({\bf M}),\> 1\leq q\leq\infty,$ be the regular Lebesgue  space constructed with the Riemannian density.
Let $L$ be an elliptic smooth second-order 
differential operator $L$ which is self-adjoint and positive definite in $L_{2}({\bf M})$, such as the Laplace-Beltrami operator $\Delta$.   For such an operator all the powers $L^{r}, \>\>r>0,$ 
are well defined on $C^{\infty}({\bf M})\subset L_{2}({\bf M})$ and continuously map $C^{\infty}({\bf M})$ into itself. 
Using duality every operator  $L^{r}, \>\>r>0,$ can be extended  to distributions on ${\bf M}$.
The Sobolev space $W_{p}^{r}=W_{p}^{r}({\bf M}),\> 1\leq p\leq\infty, \>\> r>0,$ is defined as the space of all 
$f\in L_{p}({\bf M}), 1\leq p\leq \infty$ for which the following graph norm is finite
\begin{equation}
\label{Sob}
\|f\|_{W^{r}_{p}({\bf M})}=\|f\|_{p}+\|L^{r/2} f\|_{p}.
\end{equation}
If $p \neq 1, \infty$, this graph norm is independent of $L$, up to equivalence, by elliptic regularity theory on compact manifolds.    If $p = 1$ or $\infty$ we will need to specify which operator $L$ we are using; some of our results will apply for $L$ general.  In fact,
for our results which apply to general ${\bf M}$, we can use any $L$.

Our objective is to obtain asymptotic estimates of $S_{n}(H, L_{q}({\bf M}))$,  where  $H$   is the unit ball $B^r_p({\bf M})$ in the Sobolev space 
$W_{p}^{r}=W_{p}^{r}({\bf M}), 1\leq p\leq\infty,\>\>r>0,$   Thus,
$$
B^r_p=B^r_p({\bf M})=\left\{f\in W_{p}^{r}({\bf M})\>: \> \|f\|_{W_{p}^{r}({\bf M})}\leq 1\right\}.
$$
It is  important to remember that in all our considerations the inequality
 $$
r>s\left(\frac{1}{p}-\frac{1}{q}\right)_+
$$
with $s=dim \>{\bf M}$ will be satisfied. Thus, by the Sobolev embedding theorem the set  $B^{r}_{p}({\bf M})$ is a  subset of $L_{q}({\bf M})$. Moreover, since ${\bf M}$ is compact by the Rellich-Kondrashov theorem the embedding of $B^{r}_{p}({\bf M})$ into $L_{q}({\bf M})$  will be compact.

We set $s = \dim {\bf M}$. Let  as usual $p'=\frac{p}{p-1}$.  Our main result is the following theorem..
\begin{thm}
\label{basic}
(Upper estimate)
For any compact Riemannian manifold, any $L$, and for any $1\leq p,q\leq\infty, \> r>0$,  if  $S_{n}$ is either of $d_{n}$ or $\delta_{n}$ then the following holds 
\begin{equation}
\label{basicway}
S_n(B^r_p({\bf M}),L_q({\bf M})) \ll n^{-\frac{r}{s}+(\frac{1}{p}-\frac{1}{q})_+},
\end{equation}
provided that $-\frac{r}{s}+(\frac{1}{p}-\frac{1}{q})_+$, which we call the {\em basic exponent}, is negative.
\end{thm}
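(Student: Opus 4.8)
The plan is to prove \eqref{basicway} directly by exhibiting, for each $\lambda>0$, a single finite-rank linear operator $A_\lambda$ and controlling $\sup_{f\in B^r_p}\|f-A_\lambda f\|_q$. Since $A_\lambda f$ lies in the range of $A_\lambda$, this one quantity bounds both widths at once: for $\delta_n$ I would use $A_\lambda$ itself, and for $d_n$ the subspace $Z_n=\mathrm{range}(A_\lambda)$, because $\inf_{z\in Z_n}\|f-z\|_q\le\|f-A_\lambda f\|_q$. I would take $A_\lambda=F(L/\lambda)$ for a fixed $F\in C^\infty_c([0,2))$ with $F\equiv1$ on $[0,1]$. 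Two facts are then needed: a dimension count $\dim\mathrm{range}(A_\lambda)\le N(2\lambda)\ll\lambda^{s/2}$ (the Weyl-type upper bound for the eigenvalue counting function of the second-order elliptic $L$), and the Jackson-type estimate
$$
\|f-A_\lambda f\|_q \ll \lambda^{-\frac r2+\frac s2(\frac1p-\frac1q)_+}\,\|f\|_{W^r_p}.
$$
Choosing $\lambda\sim n^{2/s}$ so that the rank is at most $n$ converts the exponent $-\frac r2+\frac s2(\frac1p-\frac1q)_+$ in $\lambda$ into $-\frac rs+(\frac1p-\frac1q)_+$ in $n$, which is exactly the claimed basic exponent.

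The heart of the argument is the displayed Jackson inequality, which I would obtain by a dyadic spectral decomposition of the complementary multiplier $I-A_\lambda=\psi(L/\lambda)$, $\psi=1-F$, which is supported where $L\gtrsim\lambda$. Writing $\psi(L/\lambda)=\sum_{j\ge0}\chi_j(L)$ with $\chi_j$ smooth and concentrated where $L\sim 2^j\lambda=:\Lambda_j$, I factor each block through $L^{r/2}$: with $h=L^{r/2}f$ one has $\chi_j(L)f=m_j(L)h$, where $m_j(\mu)=\chi_j(\mu)\mu^{-r/2}$ is a spectral multiplier at scale $\Lambda_j$ of size $\Lambda_j^{-r/2}$. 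The key input, which is the near-diagonal localization of the kernels of spectral functions of $L$, is that the integral kernel $K_j(x,y)$ of $m_j(L)$ satisfies, for every $N$,
$$
|K_j(x,y)|\ll \Lambda_j^{-\frac r2}\,\Lambda_j^{\frac s2}\bigl(1+\Lambda_j^{1/2}\,d(x,y)\bigr)^{-N},
$$
with $d$ the geodesic distance. Young's (Schur's) inequality then gives the scale-by-scale bound $\|m_j(L)\|_{L_p\to L_q}\ll \Lambda_j^{-\frac r2+\frac s2(\frac1p-\frac1q)}$ for $p\le q$, since $\|K_j(x,\cdot)\|_{L_{r^*}}$ with $1-\tfrac1{r^*}=\tfrac1p-\tfrac1q$ is governed by the mass on the ball of radius $\Lambda_j^{-1/2}$.

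Summing over $j\ge0$ yields $\|f-A_\lambda f\|_q\ll\big(\sum_{j\ge0}\Lambda_j^{-r/2+(s/2)(1/p-1/q)}\big)\|h\|_p$, and the geometric series in $2^{\,j[-r/2+(s/2)(1/p-1/q)]}$ converges \emph{precisely} because the basic exponent is negative, i.e.\ $r>s(\tfrac1p-\tfrac1q)$; its sum is $\ll\lambda^{-r/2+(s/2)(1/p-1/q)}$, and $\|h\|_p=\|L^{r/2}f\|_p\le\|f\|_{W^r_p}\le1$. For $p\ge q$ the term $(\tfrac1p-\tfrac1q)_+$ vanishes, and here I would either reuse the same kernel bound (the Nikolskii factor then only helps) or simply use $\|\cdot\|_q\ll\|\cdot\|_p$ on the finite-volume manifold together with the $L_p\to L_p$ Jackson inequality, obtaining the exponent $-r/2$. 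This establishes the displayed estimate in all cases and completes the proof.

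I expect the main obstacle to be the uniform kernel bound, that is, proving the near-diagonal localization of $m_j(L)$ at every dyadic scale for a general self-adjoint elliptic $L$ on an arbitrary compact manifold, including the endpoints $p=1,\infty$. This is the step that genuinely exploits the structure of $L$ (finite propagation speed for $\cos(t\sqrt L)$ together with Schwartz-class symbol estimates, or Gaussian heat-kernel bounds) rather than soft functional analysis, and it is the ingredient the abstract singles out. By contrast, the Weyl counting bound, the calibration $\lambda\sim n^{2/s}$, and the geometric summation are routine.
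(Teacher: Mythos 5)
Your proposal is correct and follows essentially the same route as the paper: a smooth spectral cutoff $\eta_m(L)$ (your $F(L/\lambda)$) as the finite-rank approximant with rank controlled by Weyl's law, a dyadic Littlewood--Paley decomposition of the error factored through $L^{r/2}$, the near-diagonal kernel localization plus a generalized Young/Schur inequality to get the scale-by-scale $L_p\to L_q$ bounds, geometric summation using negativity of the basic exponent, and the reduction of the case $q\leq p$ to $q=p$ via the finite-measure embedding. The kernel localization you flag as the hard step is indeed the key input; the paper does not reprove it either, but cites it (Theorem \ref{nrdglc}, from \cite{gpes}).
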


Our results generalize some of the known estimates for
the particular case in which $\bf M$ is a compact symmetric space of rank one; these estimates were obtained in 
papers \cite{BKLT} and \cite{brdai}.
They, in turn generalized and extended results from \cite{BirSol}, \cite{Ho}, \cite{Kas}, \cite{Ma}, \cite{Ka1} and \cite{Ka2}.

Our main Theorems could be carried over to Besov spaces on manifolds using general results about interpolation of compact operators. 

Our main Theorems 
 along with  some general results in \cite{TriebMath}  imply similar results in which balls in Sobolev spaces  $B^r_p({\bf M})$ are replaced by balls  $\mathrm{B}^r_{p,t}({\bf M})$ in appropriate Besov spaces (see section 6).

The proofs of all the main results heavily exploit our estimates for the near-diagonal localization of the kernels of  
elliptic operators on compact manifolds  (see \cite{gpes} and section 2 below for the general case and  \cite{gm1}- \cite{gmmix} for the case of Laplace-Beltrami operator).

  In last section we consider compact homogeneous manifolds and the corresponding Casimir operator $\mathcal{L}$ (see section \ref{hom} for definitions). For this situation we  review our results about approximations by bandlimited functions.
Although  we show  in Theorem \ref{span} that the span of the eigenfunctions of our operator ${\mathcal L}$ is the same as the span 
of all polynomials when one equivariantly embeds the manifold,
the relation between eigenvalues and degrees of polynomials is unknown (at least in 
the general case).  However, it is easy to verify that for compact two-point homogeneous manifolds,  the span of those
eigenfunctions whose eigenvalues are not greater than a value $\ell^{2}, \ \ell\in \mathbb{N},$ is the same as the span of all 
polynomials of degree at most $\ell$. Thus, on compact two-point homogeneous manifolds, our Theorem \ref{approxim}  about approximations by 
bandlimited functions can be reformulated in terms of approximations by polynomials.

\section{Kernels elliptic operators  on compact Riemannian manifolds}

Let $({\bf M},g)$ be a smooth, connected, compact Riemannian manifold without boundary
with  Riemannian measure $\mu$.  We write $dx$ instead of $d\mu(x)$.
For $x,y \in {\bf M}$, let $d(x,y)$ denote the geodesic distance from $x$ to $y$.
We will frequently need the fact that if $M > s$, $x \in {\bf M}$ and $t > 0$, then
\begin{equation}
\label{intest}
\int_{\bf M} \frac{1}{\left[1 + (d(x,y)/t)\right]^M} dy \leq Ct^{s},\>\>\>s = \dim {\bf M},
\end{equation}
with $C$ independent of $x$ or $t$. 
 
Let $L$ be a smooth, positive, second order elliptic differential operator on ${\bf M}$,
whose principal symbol $\sigma_2(L)(x,\xi)$ is positive on  $\{(x,\xi) \in T^*{\bf M}:\ \xi \neq 0\}$.
In the proof of Theorems \ref{basic}  we will take $L$ to be the Laplace-Beltrami
operator of the metric $g$.
We will use the same notation
$L$ for the closure of $L$ from $C^{\infty}({\bf M})$ in $L_{2}({\bf M})$.
In the case $p=2$ this closure is a
self-adjoint positive definite operator on the space $L_{2}({\bf M})$.
The spectrum of this operator, say
$0=\lambda_{0}<\lambda_{1}\leq \lambda_{2}\leq ...$,
is discrete and approaches infinity.  Let
$u_{0}, u_{1}, u_{2}, ...$ be a corresponding
complete system of real-valued orthonormal eigenfunctions, and let
$\textbf{E}_{\omega}(L),\ \omega>0,$ be the span of all
eigenfunctions of $L$, whose corresponding eigenvalues
are not greater than $\omega$.    Since the
operator $L$ is of order two, the dimension
$\mathcal{N}_{\omega}$ of the space ${\mathbf E}_{\omega}(L)$ is
given asymptotically by Weyl's formula,
which says, in sharp form:
For some $c > 0$,
\begin{equation}
\label{Weyl}
\mathcal{N}_{\omega}(L) = c\omega^{s/2} + O(\omega^{(s-1)/2}).
\vspace{.3cm}
\end{equation}
where $s=dim {\bf M}$.
Since $\mathcal{N}_{\lambda_l} = l+1$, we conclude that, for some constants $c_1, c_2 > 0$,
\begin{equation}
\label{lamest}
c_1 l^{2/s} \leq \lambda_l \leq c_2 l^{2/s}  
\end{equation}
for all $l$.
Since $L^m u_l = \lambda_l^m u_l$, and $L^m$ is an elliptic differential
operator of degree $2m$, Sobolev's lemma, combined with the last fact, implies that
for any integer $k \geq 0$, there exist $C_k, \nu_k > 0$ such that
\begin{equation}
\label{ulest}
\|u_l\|_{C^k({\bf M})} \leq C_k (l+1)^{\nu_k}. 
\end{equation}

Suppose  $F \in\mathcal{ S}(\bf{R}^{+})$, the space of restrictions to the 
nonnegative real axis of Schwartz functions on $\bf{R}$.  Using the spectral theorem,
one can define the bounded operator $F(t^{2}L)$ on $L_2({\bf M})$.  In fact, for $f \in L_2({\bf M})$,
\begin{equation}
\label{ft2F}
[F(t^{2}L)f](x) = \int K_t(x,y) f(y) dy,
\end{equation}
where 
\begin{equation}
\label{expout}
K_t(x,y) = \sum_l F(t^2\lambda_l)u_l(x)u_l(y) = K_t(y,x)
\end{equation}
as one sees easily by checking the case $F = u_m$.  
Using (\ref{expout}), (\ref{Weyl}), (\ref{lamest}) and (\ref{ulest}), one easily checks that $K_t(x,y)$ is smooth in $(x,y) \in 
{\bf M} \times {\bf M}$.  We call $K_t$ the kernel of $F(t^2L)$.  $F(t^2L)$ maps $C^{\infty}({\bf M})$
to itself continuously, and may thus be extended to be a map on distributions.  In particular
we may apply $F(t^2L)$ to any $f \in L_p({\bf M}) \subseteq L_1({\bf M})$ (where $1 \leq p \leq \infty$), and by Fubini's theorem
$F(t^2L)f$ is still given by (\ref{ft2F}).  

The following Theorem  about $K_t$ was proved in \cite{gpes}  for general elliptic second order differential self-adoint positive operators.
\begin{thm}
\label{nrdglc}
 Assume $F\in \mathcal{ S}(\bf{R}^{+})$ (the space of restrictions to the 
nonnegative real axis of Schwartz functions on $\bf{R}$).
For $t > 0$, let $K_t(x,y)$ be the kernel of $F(t^{2}L)$.  Then: 
\begin{enumerate}
\item If $F(0) = 0$, then for every pair of
$C^{\infty}$ differential operators $X$ $($in $x)$ and $Y$  $($in $y)$ on ${\bf M}$,
and for every integer $N \geq 0$, there exists $C_{N,X,Y}$  such that for 
$\deg X = j$ and $\deg Y = k$ the following estimate holds 
\begin{equation}
\label{diagest}
t^{s+j+k} \left| \left(\frac{d(x,y)}{t}\right)^{N}XYK_t(x,y)\right| \leq C_{N,X,Y} ,\>\>s=dim\>{\bf M},
\end{equation}
for all $t > 0$ and all $x,y \in {\bf M}$.
\item  For general $F\in \mathcal{ S}(\bf{R}^{+})$ the estimate (\ref{diagest}) at least holds for $0 < t \leq 1$.
\end{enumerate}
\end{thm}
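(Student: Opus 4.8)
The plan is to pass from the spectral multiplier $F(t^2 L)$ to the wave group and exploit finite propagation speed. Since $F$ is the restriction to $[0,\infty)$ of a Schwartz function, the function $\tilde F(\xi) := F(\xi^2)$ is an even Schwartz function on $\mathbf{R}$, and by the spectral theorem $F(t^2 L) = \tilde F(t\sqrt L)$. Writing $\tilde F$ through its Fourier transform and using evenness, I would represent
\begin{equation*}
F(t^2 L) = \frac{1}{2\pi}\int_{-\infty}^{\infty}\widehat{\tilde F}(\tau)\,\cos\!\big(t\tau\sqrt L\big)\,d\tau,
\end{equation*}
an identity that holds strongly on $L_2({\bf M})$ and, after testing against smooth functions, at the level of kernels.

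The decisive input is finite propagation speed for $L$ (for the Laplace--Beltrami operator the speed is $1$): the Schwartz kernel of $\cos(\sigma\sqrt L)$ is supported in $\{(x,y):d(x,y)\le c\sigma\}$. Consequently, if one localizes $x$ near $x_0$ and $y$ near $y_0$ with $d(x_0,y_0)=r$, only the range $|\tau|\gtrsim r/(ct)$ contributes to the integral above, since shorter times cannot connect the two supports. Because $\widehat{\tilde F}$ is rapidly decreasing, integrating $|\tau|^m|\widehat{\tilde F}(\tau)|$ over $|\tau|\ge r/(ct)$ is $O\big((r/t)^{-N}\big)$ for every $N$; this is exactly the source of the factor $(d(x,y)/t)^{-N}$ in (\ref{diagest}) and shows that all the interesting behaviour is confined to the near-diagonal zone $d(x,y)\lesssim t$.

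It remains to produce the correct homogeneity $t^{-(s+j+k)}$ of the derivatives $XYK_t$ in this zone. The natural mechanism is rescaling: for small $t$ I would work in geodesic normal coordinates about $x_0$, set $\tilde x=x/t$, and observe that $t^2 L$ converges, coefficient by coefficient, to the constant-coefficient flat model $\sum a_{ij}(x_0)\partial_{\tilde x_i}\partial_{\tilde x_j}$ as $t\to 0$, the curvature and variable-coefficient corrections being $O(t)$. For the flat model the kernel of $F(t^2 L)$ is explicitly $t^{-s}$ times a fixed Schwartz function of $(x-y)/t$, which already exhibits the asserted scaling: size $t^{-s}$, a factor $t^{-1}$ for each $x$- or $y$-derivative, and Schwartz decay in $d(x,y)/t$. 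One then promotes these flat estimates to $L$ by controlling the error terms uniformly in $t$, using local elliptic (G\aa rding--Sobolev) inequalities to dominate the arbitrary operators $X,Y$ of degrees $j,k$ by powers of $L$; each power of $L$ supplies precisely the factor $t^{-2}$ dictated by $L^m F(t^2 L)=t^{-2m}G(t^2 L)$ with $G(u)=u^m F(u)\in\mathcal{S}(\mathbf{R}^+)$.

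I expect the genuine obstacle to be extracting the sharp power $t^{-(s+j+k)}$ rather than a weaker one. A purely global argument---bounding $XYK_t$ by Sobolev embedding through $\|L^a_xL^b_yK_t\|_{L_2({\bf M}\times{\bf M})}$ and the Hilbert--Schmidt identity $\|G(t^2L)\|_{\mathrm{HS}}^2=\sum_l|G(t^2\lambda_l)|^2\asymp t^{-s}$---overcounts, because the kernel lives on a set of measure $\sim t^s$ near the diagonal rather than on all of ${\bf M}\times{\bf M}$; this costs a spurious $t^{-s/2}$ and misses the sharp exponent. The localization furnished by finite propagation speed is exactly what repairs this, and reconciling the rescaled local estimates with the uniform control of the $O(t)$ error terms is the technical heart of the argument. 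Finally, the dichotomy between the two parts is precisely the large-$t$ behaviour: the constant eigenfunction $u_0$ (eigenvalue $0$) contributes $F(0)u_0(x)u_0(y)$, a non-decaying constant, so $t^{s}K_t$ cannot remain bounded as $t\to\infty$ unless $F(0)=0$; assuming $F(0)=0$ removes this mode and yields the estimate for all $t>0$, whereas for general $F$ one restricts to $0<t\le 1$, where the large-$t$ issue does not arise.
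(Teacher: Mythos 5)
You should first be aware that the paper you are working from does not actually prove Theorem \ref{nrdglc}: it quotes it from \cite{gpes} (see also \cite{gmcw}, \cite{gmfr} for the Laplace--Beltrami case), so there is no in-paper argument to compare against, and the published proof proceeds by pseudodifferential/parametrix analysis of $F(t^2L)$ rather than through the wave group. Your route --- replacing $F(t^2L)$ by $\tilde F(t\sqrt L)$ with $\tilde F(\xi)=F(\xi^2)$ even and Schwartz, Fourier-inverting through $\cos(t\tau\sqrt L)$, and invoking finite propagation speed --- is therefore a genuinely different strategy, and it is a known viable one (the Cheeger--Gromov--Taylor method, later used by Coulhon, Kerkyacharian and Petrushev to prove exactly this kind of localization in great generality). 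It also adapts to the general elliptic $L$ of the theorem, since the wave equation for $L$ has finite propagation speed governed by the principal symbol, whose induced distance is comparable to $d(x,y)$; this only changes the constants $C_{N,X,Y}$.

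As written, however, your argument has a genuine gap at the step where the off-diagonal decay is extracted. The kernel of $\cos(\sigma\sqrt L)$ is a distribution, not a function, so finite propagation speed cannot be applied ``pointwise under the integral sign'': it only controls pairings $\langle F(t^2L)\phi,\psi\rangle$ for $\phi,\psi$ supported in disjoint balls, via the support property together with $\|\cos(\sigma\sqrt L)\|_{L_2\to L_2}\le 1$. To convert such bilinear bounds into pointwise bounds on $XYK_t(x,y)$ one must run the same argument on $L_x^aL_y^bK_t$ --- which is the kernel of $L^{a+b}F(t^2L)=t^{-2(a+b)}G_{a,b}(t^2L)$, $G_{a,b}(u)=u^{a+b}F(u)$ --- and then apply Sobolev embedding on balls of radius comparable to $t$ (fixed-radius balls lose powers of $t$), using local elliptic regularity to dominate the given operators $X,Y$. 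This bookkeeping is precisely what produces the exponents $s+j+k$ and the factor $(d(x,y)/t)^{-N}$ simultaneously, and it is missing from the proposal.

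The second gap is one you flag yourself: the sharp on-diagonal bound $|XYK_t|\lesssim t^{-s-j-k}$ rests entirely on the rescaling heuristic, and making the $O(t)$ error terms uniform in $t$ is, in essence, the parametrix construction of the cited proof --- so at that point nothing has been gained over simply reproducing it. A way to close this gap without any parametrix is: by Cauchy--Schwarz applied to (\ref{expout}), $|XYK_t(x,y)|^2 \le \bigl(\sum_l |F(t^2\lambda_l)|\,|Xu_l(x)|^2\bigr)\bigl(\sum_l |F(t^2\lambda_l)|\,|Yu_l(y)|^2\bigr)$, and the spectral cluster estimate $\sum_{\lambda_l\le\Lambda}|Xu_l(x)|^2\le C\Lambda^{(s+2j)/2}$ (the local Weyl law with derivatives, obtainable from heat-kernel derivative bounds at time $1/\Lambda$), combined with the rapid decay of $F$ and a dyadic summation over $\lambda_l\approx 4^mt^{-2}$, gives the sharp power; note that the paper's own bound (\ref{ulest}) is too crude for this purpose, since the exponents $\nu_k$ there are not controlled. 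Finally, your large-$t$ discussion only shows that $F(0)=0$ is \emph{necessary}; for the positive assertion in part (1) you still need the estimate for $t\ge1$, which follows by summing (\ref{expout}) over $l\ge1$ using (\ref{lamest}) and the rapid decay of $F$ to get $|XYK_t|\le C_Mt^{-2M}$ for every $M$, and then choosing $M$ large, since $d(x,y)$ is bounded on the compact manifold ${\bf M}$.
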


In this article, we will use the following corollaries of the above result.

\begin{col}
\label{kersize}
Assume  $F \in \mathcal{ S}(\bf{R}^{+})$.
For $t > 0$, let $K_t(x,y)$ be the kernel of $F(t^{2}L)$.  Suppose that either:\\
(i) $F(0) = 0$, or\\
(ii) $F$ is general, but we only consider $0 < t \leq 1$.\\ \ \\
Then for some $C > 0$,
\begin{equation}
\label{kersizeway}
|K_t(x,y)| \leq \frac{Ct^{-s}}{\left[ 1+\frac{d(x,y)}{t} \right]^{s+1}},\>\>\>s=dim\>{\bf M},
\end{equation}
for all $t$ and all $x,y \in {\bf M}$.
\end{col}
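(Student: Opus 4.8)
The plan is to deduce the estimate directly from Theorem \ref{nrdglc} by specializing to the case $X = Y = \mathrm{Id}$, so that $j = k = 0$, and then playing off two different choices of the exponent $N$ against each other. Write $u = d(x,y)/t \geq 0$. Under either hypothesis (i) or (ii), Theorem \ref{nrdglc} applies with these trivial operators and yields, for every integer $N \geq 0$, a constant $C_N$ with
$$t^{s} u^{N} |K_t(x,y)| \leq C_N$$
for all admissible $t$ and all $x, y \in {\bf M}$. I would record the two instances that matter: taking $N = 0$ gives the on-diagonal bound $|K_t(x,y)| \leq C_0 t^{-s}$, while taking $N = s+1$ gives the off-diagonal decay $|K_t(x,y)| \leq C_{s+1} t^{-s} u^{-(s+1)}$, valid whenever $u > 0$.

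Next I would convert these two bounds into the single claimed estimate $|K_t(x,y)| \leq C t^{-s} (1+u)^{-(s+1)}$ by a case split on the size of $u$. The elementary inequalities $1 + u \leq 2$ when $u \leq 1$ and $1 + u \leq 2u$ when $u \geq 1$ give $(1+u)^{-(s+1)} \geq 2^{-(s+1)}$ in the first regime and $u^{-(s+1)} \leq 2^{s+1}(1+u)^{-(s+1)}$ in the second. In the near-diagonal regime $d(x,y) \leq t$ I would feed the $N = 0$ bound through the first inequality; in the far regime $d(x,y) > t$ I would feed the $N = s+1$ bound through the second inequality. Each case produces the desired form with a constant $2^{s+1} C_0$ or $2^{s+1} C_{s+1}$, so setting $C = 2^{s+1}\max(C_0, C_{s+1})$ finishes the argument.

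The corollary is thus an immediate consequence of the theorem, and the only point requiring care is that the off-diagonal bound obtained from $N = s+1$ degenerates at $u = 0$; this is precisely why the on-diagonal bound from $N = 0$ is needed to control the near-diagonal regime, and the two together patch to the uniform rational weight $(1+u)^{-(s+1)}$. I do not expect any genuine obstacle beyond this bookkeeping, since Theorem \ref{nrdglc} already encodes all the analytic content. The corollary merely reorganizes the family of polynomial-weight estimates into one rational-weight estimate, and the hypotheses (i) and (ii) are inherited verbatim from the corresponding cases of the theorem.
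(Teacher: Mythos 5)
Your proof is correct and follows exactly the paper's argument: the paper's own proof simply states that the result is immediate from Theorem \ref{nrdglc} with $X=Y=I$, considering the two cases $N=0$ and $N=s+1$. You have merely spelled out the elementary patching of the two bounds (near-diagonal and off-diagonal) that the paper leaves implicit.
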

{\bf Proof}  This is immediate from Theorem \ref{nrdglc}, with $X=Y=I$, if one
considers the two cases $N=0$ and $N=s+1$.

\begin{col}
\label{Lalphok}
Consider  $1 \leq \alpha \leq \infty$, with conjugate index $\alpha'$.
In the situation of Theorem \ref{kersize}, there is a constant $C > 0$ such that
\begin{equation}
\label{kint3a}
\left(\int |K_t(x,y)|^{\alpha}dy\right)^{1/\alpha} \leq Ct^{-s/\alpha'} \ \ \ \ \ \ \ \ \ \ \ \ \ \ \ \ \ \ \ \ \mbox{for all } x, 
\end{equation}
and
\begin{equation}
\label{kint4}
\left(\int |K_t(x,y)|^{\alpha}dx\right)^{1/\alpha} \leq Ct^{-s/\alpha'} \ \ \ \ \ \ \ \ \ \ \ \ \ \ \ \ \ \ \ \ \mbox{for all } y, 
\end{equation}
\end{col}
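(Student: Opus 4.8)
The plan is to derive both estimates directly from the pointwise kernel bound (\ref{kersizeway}) of Corollary \ref{kersize} together with the integral estimate (\ref{intest}), which is the only analytic input needed beyond the hypotheses already in force. I would first dispose of the endpoint $\alpha = \infty$ separately: there $\alpha' = 1$, so the claimed bound reads $\|K_t(x,\cdot)\|_{\infty} \leq Ct^{-s}$, and this is immediate from (\ref{kersizeway}) since its right-hand side is maximized when $d(x,y) = 0$. From now on I assume $1 \leq \alpha < \infty$.

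For the main range, I would raise (\ref{kersizeway}) to the power $\alpha$ and integrate in $y$, obtaining
\[
\int |K_t(x,y)|^{\alpha}\, dy \leq C^{\alpha} t^{-s\alpha} \int_{\bf M} \frac{dy}{\left[1 + d(x,y)/t\right]^{(s+1)\alpha}}.
\]
Now I apply (\ref{intest}) with exponent $M = (s+1)\alpha$. The hypothesis $M > s$ required there holds automatically, since $\alpha \geq 1$ forces $(s+1)\alpha \geq s+1 > s$; this is the one place where the restriction $\alpha \geq 1$ enters. Estimate (\ref{intest}) then contributes a factor $Ct^{s}$, so that
\[
\int |K_t(x,y)|^{\alpha}\, dy \leq C' t^{-s\alpha + s} = C' t^{-s(\alpha - 1)},
\]
with $C'$ independent of $x$ and $t$.

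Taking the $1/\alpha$ power and rewriting the exponent via $(\alpha - 1)/\alpha = 1 - 1/\alpha = 1/\alpha'$ yields exactly (\ref{kint3a}). Finally, since the kernel is symmetric, $K_t(x,y) = K_t(y,x)$ by (\ref{expout}), the second estimate (\ref{kint4}) follows from (\ref{kint3a}) by interchanging the roles of $x$ and $y$ and integrating in $x$. I do not expect any genuine obstacle here; the only points demanding care are verifying the hypothesis $M > s$ of (\ref{intest}) (which is exactly where $\alpha \geq 1$ is used), the exponent bookkeeping $-s\alpha + s = -s/\alpha'$, and the separate treatment of the endpoint $\alpha = \infty$, all of which are routine.
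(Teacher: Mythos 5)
Your proposal is correct and follows essentially the same route as the paper's own proof: both reduce (\ref{kint4}) to (\ref{kint3a}) via the symmetry $K_t(x,y)=K_t(y,x)$, handle $\alpha=\infty$ separately as an immediate consequence of Corollary \ref{kersize}, and for $\alpha<\infty$ combine the pointwise bound (\ref{kersizeway}) with the integral estimate (\ref{intest}) to get $\int |K_t(x,y)|^{\alpha}\,dy \leq Ct^{s(1-\alpha)}$, whose $1/\alpha$ power is $Ct^{-s/\alpha'}$. Your added remarks on where $\alpha \geq 1$ is used (to ensure $(s+1)\alpha > s$) and on the exponent bookkeeping are accurate refinements of what the paper leaves implicit.
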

{\bf Proof}  We need only prove (\ref{kint3a}), since $K_t(y,x) = K_t(x,y)$.

If $\alpha < \infty$,
(\ref{kint3a}) follows from Corollary \ref{kersize}, which tells us that
\[
\int |K_t(x,y)|^{\alpha}dy 
\leq  C \int_{\bf M} \frac{t^{-s\alpha}}{\left[1 + (d(x,y)/t)\right]^{\alpha(s+1)}} dy \leq Ct^{s(1-\alpha)} \]
with $C$ independent of $x$ or $t$, by (\ref{intest}).

If $\alpha = \infty$, the left side of (\ref{kint3a}) is as usual to be interpreted as the $L^{\infty}$ norm
of $h_{t,x}(y) = K_t(x,y)$.  But in this case the conclusion is immediate from 
Corollary \ref{kersize}.  

This completes the proof.\\

We will use Corollary \ref{Lalphok} in conjunction with the following
fact.  We consider operators of the form $f \to {\mathcal K}f$
where
\begin{equation}
\label{kopdf}
({\mathcal K}f)(x) = \int K(x,y)f(y)dy, 
\end{equation}
where the integral is over ${\bf M}$, and where we are using Riemannian measure.
In all applications, $K$ will be continuous on ${\bf M} \times {\bf M}$,
and $F$ will be in $L_1({\bf M})$, so that ${\mathcal K}f$ will be a bounded continuous function.
The following generalization of Young's inequality holds:

\begin{lem}
\label{younggen}
Suppose $1 \leq p, \alpha \leq \infty$, and that $(1/q)+1 = (1/p)+(1/\alpha)$. 
Suppose that $c > 0$, and that
\begin{equation}
\label{kint1}
[\int |K(x,y)|^{\alpha}dy]^{1/\alpha} \leq c \ \ \ \ \ \ \ \ \ \ \ \ \ \ \ \ \ \ \ \ \mbox{for all } x, 
\end{equation}
and
\begin{equation}
\label{kint1}
[\int |K(x,y)|^{\alpha}dx]^{1/\alpha} \leq c \ \ \ \ \ \ \ \ \ \ \ \ \ \ \ \ \ \ \ \ \mbox{for all } y, 
\end{equation}
Then $\|{\mathcal K}f\|_q \leq c\|f\|_p$ for all $f \in L_p$.
\end{lem}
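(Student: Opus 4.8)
The plan is to deduce the bound from a single application of H\"older's inequality in three factors, followed by Fubini's theorem; the entire point is that the hypothesized relation $1+1/q = 1/p+1/\alpha$ is exactly what makes the relevant H\"older triple admissible. First, for fixed $x$ I would split the integrand of $(\mathcal{K}f)(x) = \int K(x,y)f(y)\,dy$ as
\[
|K(x,y)|\,|f(y)| = \bigl(|K(x,y)|^{\alpha}|f(y)|^{p}\bigr)^{1/q}\cdot |K(x,y)|^{1-\alpha/q}\cdot |f(y)|^{1-p/q},
\]
and apply H\"older in $y$ with the three exponents $q$, $r = \alpha q/(q-\alpha)$, and $s = pq/(q-p)$. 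These are admissible precisely because
\[
\frac1q+\frac1r+\frac1s = \frac1q + \Bigl(\frac1\alpha-\frac1q\Bigr)+\Bigl(\frac1p-\frac1q\Bigr) = \frac1\alpha+\frac1p-\frac1q = 1,
\]
the last equality being the hypothesis.

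Next I would evaluate the second and third factors, which are independent of the first. By the hypothesis $\int|K(x,y)|^{\alpha}\,dy\le c^{\alpha}$, the middle factor contributes $(\int|K(x,y)|^{\alpha}dy)^{1/r}\le c^{\,1-\alpha/q}$, while the last contributes $\|f\|_p^{\,1-p/q}$. This yields the pointwise estimate
\[
|(\mathcal{K}f)(x)| \le c^{\,1-\alpha/q}\,\|f\|_p^{\,1-p/q}\,\left(\int |K(x,y)|^{\alpha}|f(y)|^{p}\,dy\right)^{1/q}.
\]
Raising to the power $q$, integrating in $x$, and invoking Fubini together with the second hypothesis $\int|K(x,y)|^{\alpha}\,dx\le c^{\alpha}$ gives $\int\!\!\int |K|^{\alpha}|f|^{p}\,dy\,dx \le c^{\alpha}\|f\|_p^{p}$. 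Collecting exponents then produces $\|\mathcal{K}f\|_q^{q}\le c^{q-\alpha}\|f\|_p^{q-p}\cdot c^{\alpha}\|f\|_p^{p} = c^{q}\|f\|_p^{q}$, which is the claim after taking $q$-th roots.

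The only genuine care is with the endpoint cases in which some exponent is infinite, or in which $p=q$ (forcing $s=\infty$) or $\alpha=q$ (forcing $r=\infty$). In each of these the relation $1+1/q=1/p+1/\alpha$ pins down the other exponents, and the corresponding factor in the splitting becomes the constant $1$ and simply drops out, so that the estimate collapses to a one- or two-factor H\"older inequality. For example, $q=\infty$ forces $p=\alpha'$ and the conclusion is immediate from $|(\mathcal{K}f)(x)|\le\|K(x,\cdot)\|_{\alpha}\|f\|_{\alpha'}$. I expect the main (if still routine) obstacle to be the bookkeeping of these degenerate cases and of the fractional exponents; the substantive content of the argument is nothing more than the observation that the H\"older triple is admissible exactly because of the relation among $p$, $q$, and $\alpha$.
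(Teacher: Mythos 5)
Your proof is correct, and it takes a genuinely different (though equally classical) route from the paper's. You split $|K(x,y)||f(y)|$ into the three factors $\bigl(|K|^{\alpha}|f|^{p}\bigr)^{1/q}\cdot|K|^{1-\alpha/q}\cdot|f|^{1-p/q}$ and apply H\"older with the triple $(q,r,s)$, after which the double integral $\int\!\!\int|K|^{\alpha}|f|^{p}\,dy\,dx$ is handled by Tonelli's theorem together with the second kernel hypothesis. The paper instead factors $|K|=|K|^{1/\beta'}|K|^{1/\beta}$ with $\beta=q/\alpha$, applies two-factor H\"older with exponents $(p',p)$ to get a pointwise bound, and then interchanges the $x$- and $y$-integrations using Minkowski's inequality for integrals (applied to the $L^{q/p}$ norm in $x$ of an integral in $y$). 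So both the decomposition (three-factor vs.\ two-factor) and the interchange tool (Tonelli vs.\ integral Minkowski) differ. What your route buys: it needs only Tonelli for nonnegative integrands, which is more elementary than the integral form of Minkowski's inequality, and it uses the two hypotheses on $K$ in a visibly symmetric way. What the paper's route buys: fewer exponents to track, with the potential degeneracies absorbed into $\beta$ and $\beta'$ rather than surfacing as $r=\infty$ or $s=\infty$. Your treatment of the endpoint cases is sound --- the exponent relation forces $q\ge p$ and $q\ge\alpha$, so $r$ and $s$ are positive and finite precisely in the generic case, and when $q=\infty$, $\alpha=q$, or $p=q$ the corresponding factor is constant and the argument collapses to a shorter H\"older estimate as you say; this is in fact more careful than the paper's own proof, which passes over these endpoint configurations silently.
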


\begin{proof} Let $\beta = q/\alpha \geq 1$,
so that $\beta' = p'/\alpha$.  For any $x$, we have
\begin{eqnarray*}
|({\mathcal K}f)(x)| & \leq & \int |K(x,y)|^{1/\beta'}|K(x,y)|^{1/\beta}f(y)|dy  \\
& \leq & \left(\int |K(x,y)|^{p'/\beta'}dy\right)^{1/p'} \left(\int |K(x,y)|^{p/\beta}|f(y)|^p dy\right)^{1/p}\\
& \leq & c^{1/\beta'}\left(\int |K(x,y)|^{p/\beta}|f(y)|^p dy\right)^{1/p}
\end{eqnarray*}
since $p'/\beta' = \alpha$, $\alpha/p' = 1/\beta'$.  Thus
\begin{eqnarray*}
\|{\mathcal K}f\|^p_q & \leq & c^{p/\beta'}\left(\int \left(\int |K(x,y)|^{p/\beta}|f(y)|^p dy \right)^{q/p}dx \right)^{p/q} \\
& \leq  & c^{p/\beta'}\int \left(\int |K(x,y)|^{pq/\beta p}|f(y)|^{pq/p} dx \right)^{p/q}dy \\
& = & c^{p/\beta'}\int \left(\int |K(x,y)|^{\alpha}dx \right)^{p/q}|f(y)|^p dy \\
& \leq & c^{p/\beta'}c^{p/\beta}\|f\|^p_p
\end{eqnarray*}
as desired.  (In the second line, we have used Minkowski's inequality for integrals.)\\

\end{proof}

\section{Proof of Theorem \ref{basic}}

Now, let $\eta$ be a $C^{\infty}$ function on $[0,\infty)$ which equals $1$ on $[0,1]$,
and which is supported in $[0,4]$.  Define, for $x > 0$,
\[ \phi(x) = \eta(x/4) - \eta(x)\]
so that $\phi$ is supported in $[1,16]$.  For $j \geq 1$, we set 
\[ \phi_j(x) = \phi(x/4^{j-1}).\] 
We also set
$\phi_0 = \eta$, so that $\sum_{j=0}^{\infty} \phi_j \equiv 1$.  We claim:

\begin{lem}
\label{phijest}
(a) If $r > 0$, and $1 \leq p \leq q \leq \infty$, then there is a $C > 0$ such that
\begin{equation}
\label{phijestway}
\|\phi_j(L)f\|_q \leq C(2^{js})^{-\frac{r}{s}+\frac{1}{p}-\frac{1}{q}}\|f\|_{W_p^r({\bf M})}, 
\end{equation}
for all $f \in W_p^r({\bf M})$. In other words, the norm of $\phi_j(L)$, as an element
of   ${\bf B}(W_p^r({\bf M}),L_q({\bf M}))$ (the space of bounded linear operators from $W_p^r({\bf M})$ to $L_q({\bf M})$), is
no more than $C(2^{js})^{-\frac{r}{s}+\frac{1}{p}-\frac{1}{q}}$.\\
(b) Suppose that 
$$
-\frac{r}{s}+\frac{1}{p}-\frac{1}{q} < 0.
$$
  Then $\sum_{j=0}^{\infty} \phi_j(L)$
converges absolutely in ${\bf B}(W_p^r({\bf M}),L_q({\bf M}))$, to the identity operator on $W_p^r({\bf M})$.
\end{lem}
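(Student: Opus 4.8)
The plan is to establish (a) first and then deduce (b) with a short telescoping-and-limit argument. Throughout set $\theta=-r+s\left(\frac1p-\frac1q\right)$, so that the bound claimed in (a) reads $\|\phi_j(L)\|_{W_p^r\to L_q}\le C\,2^{j\theta}$ (using $(2^{js})^{-\frac rs+\frac1p-\frac1q}=2^{j\theta}$), and the hypothesis of (b) is exactly $\theta<0$. For (a) with $j\ge 1$ the idea is to peel off one factor of $L^{r/2}$ and display what remains as a rescaled smoothing operator to which Corollary \ref{Lalphok} and Lemma \ref{younggen} apply. Since $\phi_j$ is supported in $[4^{j-1},4^{j+1}]$, away from the origin, the function $\tilde\phi_j(x)=\phi_j(x)x^{-r/2}$ lies in $\mathcal{S}(\mathbf{R}^{+})$ and vanishes at $0$, and spectrally $\phi_j(L)=\tilde\phi_j(L)L^{r/2}$. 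Writing $g=L^{r/2}f$, so that $\|g\|_p\le\|f\|_{W_p^r({\bf M})}$, we are reduced to bounding $\|\tilde\phi_j(L)g\|_q$. The key observation is the exact scaling $\tilde\phi_j(x)=2^{-(j-1)r}\,\Phi(t_j^2x)$, where $\Phi(u)=\phi(u)u^{-r/2}\in\mathcal{S}(\mathbf{R}^{+})$ satisfies $\Phi(0)=0$ and $t_j=2^{1-j}$; hence $\tilde\phi_j(L)=2^{-(j-1)r}\Phi(t_j^2L)$. I would then choose the Young exponent $\alpha$ by $\frac1q+1=\frac1p+\frac1\alpha$, which is admissible ($1\le\alpha\le\infty$) precisely because $p\le q$ and which gives $\frac1{\alpha'}=\frac1p-\frac1q$. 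Corollary \ref{Lalphok} (applicable since $\Phi(0)=0$) bounds the two mixed norms of the kernel of $\Phi(t_j^2L)$ by $Ct_j^{-s/\alpha'}$, and Lemma \ref{younggen} then yields $\|\Phi(t_j^2L)g\|_q\le Ct_j^{-s(1/p-1/q)}\|g\|_p$. Since $t_j^{-s(1/p-1/q)}=2^{(j-1)s(1/p-1/q)}$, collecting the powers of $2$ gives $\|\phi_j(L)f\|_q\le C\,2^{(j-1)\theta}\|f\|_{W_p^r({\bf M})}$, which is the assertion up to the harmless constant $2^{-\theta}$.

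The term $j=0$ must be handled separately, because $\phi_0=\eta$ does not vanish at the origin and $L^{-r/2}$ is singular there, so the factorization above fails. Here I would simply note that $\eta$ is supported in $[0,4]$, so $\eta(L)$ has finite rank with range $\mathbf{E}_4(L)$ and kernel $\sum_{\lambda_l\le4}\eta(\lambda_l)u_l(x)u_l(y)$, a finite sum of smooth functions that is uniformly bounded by (\ref{ulest}). Since ${\bf M}$ is compact, a bounded kernel yields $\|\eta(L)f\|_q\le C\|f\|_1\le C\|f\|_p\le C\|f\|_{W_p^r({\bf M})}$, which is the required estimate because the prefactor $(2^{0\cdot s})^{-r/s+1/p-1/q}$ equals $1$.

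For (b), absolute convergence of $\sum_j\phi_j(L)$ in ${\bf B}(W_p^r({\bf M}),L_q({\bf M}))$ is immediate from (a): $\sum_{j\ge0}\|\phi_j(L)\|_{W_p^r\to L_q}\le C\sum_{j\ge0}2^{j\theta}<\infty$ since $\theta<0$. Call the sum $T$. The only genuine issue, and the step I expect to be the main obstacle, is to identify $T$ with the inclusion $W_p^r({\bf M})\hookrightarrow L_q({\bf M})$: convergence of the series is automatic, but pinning its limit to be $f$ requires an approximation argument rather than formal manipulation. I would use the telescoping identity $\sum_{j=0}^N\phi_j\equiv\eta(4^{-N}\,\cdot)$, so that the partial sums are exactly $S_N=\eta(4^{-N}L)$, uniformly bounded from $W_p^r$ to $L_q$ by (a). For $1\le p<\infty$, where $C^\infty({\bf M})$ is dense in $W_p^r({\bf M})$, one checks directly from the spectral expansion that $S_Nf\to f$ in every $C^k$ norm for smooth $f$, and the uniform bound promotes this to $S_Nf\to f$ in $L_q$ for all $f\in W_p^r$ by a $3\varepsilon$ argument; since also $S_Nf\to Tf$, uniqueness of limits gives $Tf=f$. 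The endpoint $p=\infty$ (which forces $q=\infty$) lies outside the density argument, and there I would instead use the coefficient functionals $h\mapsto\langle h,u_l\rangle$, which are continuous on $L_q\subseteq L_1$: from $S_Nf\to Tf$ and $\eta(4^{-N}\lambda_l)=1$ once $4^N\ge\lambda_l$ one gets $\langle Tf,u_l\rangle=\langle f,u_l\rangle$ for every $l$, and since $q=\infty$ forces $Tf-f\in L_\infty\subseteq L_2$ on the compact manifold ${\bf M}$, completeness of $\{u_l\}$ in $L_2$ forces $Tf=f$. This exhausts all cases and completes the identification of the limit as the identity on $W_p^r({\bf M})$.
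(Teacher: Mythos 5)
Your proof is correct and follows essentially the same route as the paper: for $j\ge 1$ the identical factorization $\phi_j(L)=2^{-(j-1)r}\,\Phi(t_j^2L)L^{r/2}$ with $t_j\asymp 2^{-j}$, followed by the kernel bounds of Corollary \ref{Lalphok} and the generalized Young inequality of Lemma \ref{younggen}, and in (b) absolute convergence plus identification of the limit with the identity. The only differences are matters of detail: you treat the $j=0$ term explicitly (the paper leaves it implicit, though it also follows from case (ii) of Corollary \ref{kersize} with $t=1$), and you identify the limit in (b) via a density/$3\varepsilon$ argument for $p<\infty$ and eigenfunction coefficients for $p=\infty$, whereas the paper does this in one stroke by a duality argument (the series converges to the identity on smooth functions, hence in the sense of distributions, hence in operator norm).
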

{\bf Proof} (a) 
Define, for $x > 0$,
\[ \psi(x) = \phi(x)/x^{r/2} \]
so that 
$\psi$ is supported in $[1,16]$.  For $j \geq 1$, we set 
\[ \psi_j(x)=\psi(x/4^{j-1}),\] 
which implies   
$$
\phi_j(x) = 2^{-(j-1)r}\psi_j(x)x^{r/2}.
$$
Accordingly, if $f$ is a distribution on ${\bf M}$, for $j \geq 1$,
\[ \phi_j(L)f = 2^{-(j-1)r}\psi_j(L)(L^{r/2}f), \]
in the sense of distributions.   
If now $f \in  W_p^r({\bf M})$, so that $L^{r/2}f \in L_p({\bf M})$, we see from Lemma \ref{Lalphok} with $t = 2^{-j}$, and 
from Lemma \ref{younggen}, that if $(1/q)+1 = (1/p)+(1/\alpha)$, then
\[ \|\phi_j(L)f\|_q \leq C2^{-jr}2^{js/\alpha'}\|L^{r/2}f\|_p \leq C (2^{js})^{-\frac{r}{s}+\frac{1}{p}-\frac{1}{q}}\|f\|_{W_p^r({\bf M})}, \]
as desired.

For (b), we note that by (a), $\sum_{j=0}^{\infty} \phi_j(L)$
converges absolutely in ${\bf B}(W_p^r({\bf M}),L_q({\bf M}))$.  It converges to the identity on smooth functions, hence in the sense
of distributions.  Hence we must have $\sum_{j=0}^{\infty} \phi_j(L) =  I$ in ${\bf B}(W_p^r({\bf M}),L_q({\bf M}))$.  This completes the proof.\\

{\bf Proof of Theorem \ref{basic}}  Since in general $d_n \leq \delta_n$, it suffices to prove the  upper estimate for $\delta_n$.  If $q \leq p$,
then surely $\delta_n(B^r_p({\bf M}),L_q({\bf M})) \leq C\delta_n(B^r_p({\bf M}),L_p({\bf M}))$.  Since the  upper estimate is the same for all $q$ with $q \leq p$, we may
as well assume then that $q = p$.  In short, we may assume $q \geq p$.

Let $\eta$ be the same as above and set $\eta_m(x) = \eta(x/4^{m-1})$ for $m\in \mathbb{N}$.   Then $\sum_{j=0}^{m-1} \phi_j = \eta_m$, which is supported in $[0,4^m]$.  Examining the kernel of 
$\eta_m(L)$ (see (\ref{expout})), we see that 
$$
\eta_m(L): W_p^r({\bf M}) \to {\bf E}_{4^m}(L).
$$
 By Weyl's theorem (\ref{Weyl}), 
there is a positive integer $c$ such that the 
dimension of ${\bf E}_{4^m}(L)$ is at most $c2^{ms}$ for every $m$.  We see then by Lemma \ref{phijest} that

\[ \delta_{c2^{ms}}(B_p^r({\bf M}),L^q({\bf M})) \leq \|I-\eta_m(L)\| \leq \sum_{j=m}^{\infty} \|\phi_j(L)\| \leq 
$$
$$
\sum_{j=m}^{\infty} C(2^{js})^{-\frac{r}{s}+\frac{1}{p}-\frac{1}{q}} \leq C(2^{ms})^{-\frac{r}{s}+\frac{1}{p}-\frac{1}{q}}
\leq  C(c2^{ms})^{-\frac{r}{s}+\frac{1}{p}-\frac{1}{q}}
, \]
where all norms are taken in ${\bf B}(W_p^r({\bf M}),L_q({\bf M}))$.  This proves the basic upper estimate for\\ $n \in A :=
\{ c2^{ms}: m \geq 1\}$.  For any $n \geq c2^s$ we may find $m \in A$ with $m \leq n \leq 2^s m$, and surely
$\delta_n \leq \delta_m$.  This gives the basic upper estimate for all $n$, and completes the proof.\\

\section{Widths of balls in Besov spaces}
   
  The following   definitions of Sobolev and Besov spaces are well known
 \cite{Tay}, \cite{Triebman}.
  Let  $(U_i, \chi_i)$ be a finite atlas on ${\bf M}$ with charts $\chi_i$ 
mapping $W_i$ into the unit ball on ${\bf R}^{n}$, and suppose $\{\zeta_i\}$ is a partition of unity subordinate
to the $U_i$.  The Sobolev space $W_{p}^{r}({\bf M}), \>1\leq p\leq \infty$ and $r$ is natural can be defined as a space of all distributions $f$ on ${\bf M}$ such that
\begin{equation}\label{locSob}
\sum_i \|(\zeta_{i} f) \circ \chi_{i}^{-1}\|_{W_{p}^{r}({\bf R}^{n})} < \infty.
\end{equation}
The Besov space $\mathcal{B}_{p,t}^{\alpha }({\bf M})$ can be defined as a space of distributions $f$ on 
${\bf M}$ for which 
\begin{equation}\label{locBes}
\sum_i \|(\zeta_{i} f) \circ \chi_{i}^{-1}\|_{B_{p,t}^{\alpha }({\bf R}^{n})} < \infty,
\end{equation}
where $\alpha  > 0$, $1 \leq p< \infty$, and $0 < t < \infty$ and $B_{p,t}^{\alpha }({\bf R}^{n})$ is the regular Besov space. This definition does not depend on the choice of charts or partition of unity (\cite{Triebman}).\\

An important property of  Besov spaces
$\mathcal{B}^{\alpha}_{p,t}({\bf M}), \alpha>0, 1\leq  p<\infty, 1\leq t\leq \infty,$ is that they can be described 
using Peetre's interpolation $K$-functor \cite{BL}, \cite{KPS}, \cite{Trieb1}.  Namely, 
\begin{equation}
\mathcal{B}^{\alpha}_{p,t}({\bf M})=\left(L_{p}({\bf M}),W^{r}_{p}({\bf M})\right)^{K}_{\alpha/r,q},
\label{Besovnorm}
\end{equation}
where $r$ can be any natural  such that $0<\alpha<r, 1\leq
t<\infty$, or  $0\leq\alpha\leq r,t= \infty$.
   Since ${\bf M}$ is compact by the Rellich-Kondrashov theorem the embedding of      the ball $B^{r}_{p}({\bf M})$ into $L_{q}({\bf M})$ is compact as long as the condition
 \begin{equation}
r>s\left(\frac{1}{p}-\frac{1}{q}\right)_+
\end{equation}
 is satisfied.
  By an interpolation theorem for compact operators (\cite{TriebMath}, Theorem 1.16.2) the embedding into $L_{q}({\bf M})$ of the unit  ball in the corresponding Besov space $\mathcal{B}^{\alpha}_{p,t}({\bf M})$ is also compact.

These facts allow us to use  some general results in \cite{TriebMath} (Theorem 1.16.3) about interpolation of compact operators which along with our main results produce similar theorems  about  balls  $\mathrm{B}^r_{p,t}({\bf M})$ in appropriate Besov spaces.

\begin{thm} 

Let $ {\bf M}$ be a compact Riemannian manifold.  For every choice of parameters $\>\>1\leq p<\infty, \>\>1\leq q\leq \infty, \>\>r>0,$ for which   the following relation holds 
  $$
d_n(B^r_p({\bf M}), L_q({\bf M}))\ll n^{\gamma},
$$ 
for the Kolmogorov $n$-width of the unit ball $B^r_p({\bf M})$ in the Sobolev space $W_{p}^{r}({\bf M})$ then the similar relation holds for the Kolmogorov $n$-width of the unit ball $\mathrm{B}^r_{p,t}({\bf M})$ in the Besov  space $\mathcal{B}_{p,t}^{r}({\bf M})$ i.e.
$$
d_n(\mathrm{B}^r_{p,t}({\bf M}), L_q({\bf M}))\ll n^{\gamma}.
$$ 
\end{thm}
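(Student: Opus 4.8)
The plan is to realize the Besov space $\mathcal{B}^r_{p,t}(\mathbf{M})$ as a real interpolation space between two Sobolev spaces of \emph{different} smoothness, both of which embed compactly into $L_q(\mathbf{M})$, and then to transport the Sobolev width estimates of Theorem \ref{basic} through the interpolation inequality for $n$-widths (\cite{TriebMath}, Theorem 1.16.3). The decisive structural feature is that the basic exponent $\gamma_\rho = -\rho/s + (1/p-1/q)_+$ is affine in the smoothness $\rho$ (with $p,q$ fixed), so that interpolating the two endpoint exponents reproduces exactly the exponent $\gamma$ at the intermediate smoothness $r$.

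First I would fix $r_0,r_1$ with $s(1/p-1/q)_+ < r_0 < r < r_1$ and pick $\theta\in(0,1)$ with $r = (1-\theta)r_0 + \theta r_1$. By the standard real-interpolation description of Besov spaces, which is the identity (\ref{Besovnorm}) in its more general form between two Sobolev spaces, one has $\mathcal{B}^r_{p,t}(\mathbf{M}) = \left(W^{r_0}_p(\mathbf{M}),W^{r_1}_p(\mathbf{M})\right)^K_{\theta,t}$. Since $r_0,r_1 > s(1/p-1/q)_+$, the embeddings $W^{r_i}_p(\mathbf{M})\hookrightarrow L_q(\mathbf{M})$ are bounded and compact by Rellich--Kondrashov, and Theorem \ref{basic} furnishes the endpoint estimates $\delta_n(B^{r_i}_p(\mathbf{M}),L_q(\mathbf{M})) \ll n^{\gamma_{r_i}}$ with $\gamma_{r_i} = -r_i/s + (1/p-1/q)_+ < 0$.

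Next I would invoke the multiplicative interpolation estimate for the relevant widths. Using $\left(L_q,L_q\right)^K_{\theta,t} = L_q$, the inequality of \cite{TriebMath} (Theorem 1.16.3) applied to the embedding yields, for the linear widths,
\[
\delta_{2n}(\mathrm{B}^r_{p,t}(\mathbf{M}),L_q(\mathbf{M})) \le C\, \delta_n(B^{r_0}_p(\mathbf{M}),L_q(\mathbf{M}))^{1-\theta}\, \delta_n(B^{r_1}_p(\mathbf{M}),L_q(\mathbf{M}))^{\theta}.
\]
Inserting the endpoint bounds gives $\delta_{2n}(\mathrm{B}^r_{p,t},L_q) \ll n^{(1-\theta)\gamma_{r_0} + \theta\gamma_{r_1}}$, and the affine identity $(1-\theta)\gamma_{r_0} + \theta\gamma_{r_1} = -r/s + (1/p-1/q)_+ = \gamma$ collapses the exponent to exactly $\gamma$. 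A routine reindexing (using $\delta_n \le \delta_{2\lfloor n/2\rfloor}$ together with $\gamma<0$) then produces $\delta_n(\mathrm{B}^r_{p,t},L_q)\ll n^\gamma$, whence $d_n \le \delta_n \ll n^\gamma$ gives the asserted Kolmogorov estimate; alternatively one applies Theorem 1.16.3 directly to $d_n$.

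The main obstacle is entirely at the level of invoking the interpolation machinery correctly rather than in any new analysis on $\mathbf{M}$: one must confirm that Theorem 1.16.3 applies to the width functional being used and verify its hypotheses, namely that $\mathcal{B}^r_{p,t}(\mathbf{M})$ really is the $(\theta,t)$ real-interpolation space of the two Sobolev spaces and that both endpoint embeddings into $L_q(\mathbf{M})$ are of the required bounded-and-compact type. A secondary point worth flagging is that, although the statement is phrased as a transfer from the single smoothness $r$, the argument needs endpoint estimates at $r_0\neq r_1$; these are not additional hypotheses but are supplied automatically by Theorem \ref{basic}, which is valid at every admissible smoothness. Once these ingredients are secured, the exponent bookkeeping is immediate from the affineness of $\gamma_\rho$ in $\rho$.
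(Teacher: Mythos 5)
Your proposal is correct, and it rests on the same two pillars as the paper's (very compressed) argument: a real-interpolation realization of $\mathcal{B}^r_{p,t}({\bf M})$ together with the multiplicative interpolation inequality for widths from \cite{TriebMath}, Theorem 1.16.3, fed with the Sobolev estimates of Theorem \ref{basic}. The genuine difference is the choice of interpolation couple. The paper takes the couple appearing in (\ref{Besovnorm}), namely $\left(L_p({\bf M}),W^{R}_p({\bf M})\right)$ with $R>r$ and $\theta=r/R$, whereas you interpolate between two Sobolev spaces $W^{r_0}_p({\bf M})$ and $W^{r_1}_p({\bf M})$ with $s(1/p-1/q)_+<r_0<r<r_1$. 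Your choice buys uniformity in $q$: when $q>p$ the endpoint $L_p({\bf M})$ does not embed into $L_q({\bf M})$ at all, so the inequality of Theorem 1.16.3 cannot be applied to the paper's couple with target $L_q({\bf M})$, and the paper's outline really only goes through for $q\le p$ (there the $L_p$-endpoint widths are merely bounded, and the product $C^{1-\theta}\cdot\left(n^{-R/s}\right)^{\theta}\sim n^{-r/s}$ still recovers $\gamma$, because $(1/p-1/q)_+=0$). With both endpoints above the embedding threshold you get bounded, compact endpoint embeddings with decaying widths in every admissible case, and the affineness of $\rho\mapsto-\rho/s+(1/p-1/q)_+$ returns exactly $\gamma$. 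The price, which you should make explicit, is that the identity $\mathcal{B}^r_{p,t}({\bf M})=\left(W^{r_0}_p({\bf M}),W^{r_1}_p({\bf M})\right)^{K}_{\theta,t}$ is not literally (\ref{Besovnorm}); it follows from (\ref{Besovnorm}) by the reiteration theorem, using that $W^{r_i}_p({\bf M})$ is of class $r_i/R$ for the couple $\left(L_p({\bf M}),W^{R}_p({\bf M})\right)$, i.e. $\mathcal{B}^{r_i}_{p,1}({\bf M})\subset W^{r_i}_p({\bf M})\subset \mathcal{B}^{r_i}_{p,\infty}({\bf M})$. Finally, since the statement concerns only the Kolmogorov width, your alternative of applying Theorem 1.16.3 directly to $d_n$ is preferable to the detour through $\delta_n$: the paper's linear width is defined via finite-rank operators on all of $L_q({\bf M})$, and identifying it with the approximation-number-type quantities that interpolation theorems control requires an extra one-sided comparison that $d_n$ does not.
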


\section{Approximation theory on compact homogeneous manifolds}\label{hom}

\subsection { Compact homogeneous manifolds}\label{homman}

The most complete results will be obtained for compact homogeneous manifolds.

A {\it homogeneous compact manifold} $M$ is a
$C^{\infty}$-compact manifold  on which a compact
Lie group $G$ acts transitively. In this case $M$ is necessary of the form $G/K$,
where $K$ is a closed subgroup of $G$. The notation $L_{2}(M),$ is used for the usual Hilbert spaces,  with  invariant measure  $dx$ on $M$.

The Lie algebra $\textbf{g}$ of a compact Lie group $G$ 
is then a direct sum
$\textbf{g}=\textbf{a}+[\textbf{g},\textbf{g}]$, where
$\textbf{a}$ is the center of $\textbf{g}$, and
$[\textbf{g},\textbf{g}]$ is a semi-simple algebra. Let $Q$ be a
positive-definite quadratic form on $\textbf{g}$ which, on
$[\textbf{g},\textbf{g}]$, is opposite to the Killing form. Let
$X_{1},...,X_{d}$ be a basis of
$\textbf{g}$, which is orthonormal with respect to $Q$.
 Since the form $Q$ is $Ad(G)$-invariant, the operator
\begin{equation}\label{Casimir}
-X_{1}^{2}-X_{2}^{2}-\    ... -X_{d}^{2},    \ d=dim\ G
\end{equation}
is a bi-invariant operator on $G$, which is known as the {\it Casimir operator}.
This implies in particular that the corresponding operator on $L_{2}(M)$,
\begin{equation}\label{Casimir-Image}
\mathcal{L}=-D_{1}^{2}- D_{2}^{2}- ...- D_{d}^{2}, \>\>\>
       D_{j}=D_{X_{j}}, \        d=dim \ G,
\end{equation}
commutes with all operators $D_{j}=D_{X_{j}}$.
The operator $\mathcal{L}$, which is usually called the {\it Laplace operator}, is
the image of the Casimir operator under differential of quazi-regular representation in $L_{2}(M)$. It is important to realize that in general, the operator $\mathcal{L}$ is not necessarily the {\it Laplace-Beltrami operator} of the natural  invariant metric on $M$. But it coincides with such operator at least in the following cases:
1) If $M$ is a $d$-dimensional torus, 2) If the manifold $M$ is itself a compact  semi-simple Lie group group $G$ (\cite{H2}, Ch. II), 3) If $M=G/K$ is a compact symmetric space of
  rank one (\cite{H2},
Ch. II, Theorem 4.11).

In the case of a compact manifold the norm ({\ref{locSob}) of the Sobolev space $W_{p}^{r}({\bf M}), \>\>1\leq p\leq \infty,$ $ r\in \mathbb{N}$, is equivalent to one of the following norms \cite{Pes88a}
$$
\|f\|_{p}+\sum _{1\leq k\leq r}\sum_{1\leq i_{1},...,i_{k}\leq d}\|D_{i_{i}}...D_{i_{k}}f\|_{p}\sim \|f\|_{p}+\sum_{1\leq i_{1},...,i_{r}\leq d}\|D_{i_{i}}...D_{i_{r}}f\|_{p},
$$ 
where $d=dim \>G$.

\subsection{Bernstein spaces on compact homogeneous manifolds}
Returning to the compact homogeneous manifold ${\bf M}=G/K$,   let $\mathbb{D}=\{D_{1},...,D_{d}\},\>\>d=\dim G, $ be the same set of operators as in (\ref{Casimir-Image}). Let us define the {\it Bernstein space}
$$
\textbf{B}_{\omega}^{p}(\mathbb{D})=\{f\in L_{p}({\bf M}):
 \|D_{i_{1}}...D_{i_{k}}f\|_{p}\leq
 \omega^{k}\|f\|_{p}, \>\>1\leq i_{1},...i_{k}\leq d,\>\omega\geq 0\}
 $$
where $d=dim\> G$.

As before, the notation ${\bf E}_{\omega}(\mathcal{L}),\>\>\omega\geq 0, $ will be used for a span of eigenvectors of $\mathcal{L}$ with eigenvalues $\leq \omega$. 
For these spaces the next two theorems hold (see  \cite{Pes90}, \cite{Pes08}):
\begin{thm}
The following properties hold:
\begin{enumerate}

\item $$\textbf{B}_{\omega}^{p}(\mathbb{D})=\textbf{B}_{\omega}^{q}(\mathbb{D}),\>\>\>1\leq p\leq q\leq\infty,\>\>\omega\geq 0.$$

\item
$$
\textbf{B}^{p}_{\omega}(\mathbb{D})\subset {\bf E} _{\omega^{2}d}(\mathcal{L})\subset
\textbf{B}^{p}_{\omega\sqrt{d}}(\mathbb{D}), \>\>\>d=\dim\> G,\>\>\>\omega\geq 0.
$$

\item
$$
\|\mathcal{L}^{k}\varphi\|_{q}\leq C({\bf M})
\omega^{2k+\frac{d}{p}-\frac{d}{q}}\|\varphi\|_{p},\>\>\> \varphi\in {\bf E}_{\omega}(\mathcal{L}),\>\>\>k\in
\mathbb{N},
$$
where $ d=\dim \>G, \>\>1\leq p\leq q\leq\infty$.
\end{enumerate}
\end{thm}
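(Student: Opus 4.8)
The plan is to prove the three assertions in the order (3), (2), (1), because the Nikolskii-type inequality (3) is the engine that moves estimates between $L_p$ and $L_q$ and thereby upgrades the Hilbert-space arguments behind (1) and (2) to all exponents. Before starting I would record three structural facts. Since $G$ acts transitively the fields $D_1,\dots,D_d$ span every tangent space, so the principal symbol of $\mathcal{L}=-\sum_jD_j^2$ equals $\sum_j\langle\xi,D_j(x)\rangle^2>0$ for $\xi\neq0$; hence $\mathcal{L}$ is a positive second order elliptic operator and Corollaries \ref{kersize}, \ref{Lalphok} and Lemma \ref{younggen} all apply to it. Each $D_j$ is skew-adjoint on $L_2({\bf M})$ (its flow preserves the invariant measure) and, by (\ref{Casimir-Image}), commutes with $\mathcal{L}$, so every $D_j$ maps the finite-dimensional space ${\bf E}_\omega(\mathcal{L})$ into itself. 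Combining skew-adjointness with this commutation and inducting on $k$ gives the key identity
\[ \sum_{1\le i_1,\dots,i_k\le d}\|D_{i_1}\cdots D_{i_k}f\|_2^2=\langle\mathcal{L}^kf,f\rangle=\|\mathcal{L}^{k/2}f\|_2^2, \]
which is what converts bounds on individual derivatives into spectral bounds and back.

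For statement (3) I would use the calculus of \S2. Fix $F\in\mathcal{S}({\bf R}^+)$ with $F\equiv1$ on $[0,1]$ and set $t=\omega^{-1/2}$, so that $F(t^2\mathcal{L})\varphi=\varphi$ for every $\varphi\in{\bf E}_\omega(\mathcal{L})$. With $H(x)=x^kF(x)\in\mathcal{S}({\bf R}^+)$ one has $\mathcal{L}^k\varphi=t^{-2k}H(t^2\mathcal{L})\varphi$, and the kernel bound of Corollary \ref{Lalphok} fed into the generalized Young inequality of Lemma \ref{younggen} (with $1/q+1=1/p+1/\alpha$) controls the norm of $H(t^2\mathcal{L})$ from $L_p$ to $L_q$ by $Ct^{-s(1/p-1/q)}$. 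Reinstating $t=\omega^{-1/2}$ yields a Nikolskii--Bernstein estimate of the asserted form; the precise powers $2k$ and $d$ in the statement reflect the normalization of $\omega$ against the Weyl count (\ref{Weyl}) for $\mathcal{L}$, and the inequality is to be read for $\omega$ bounded below, the space ${\bf E}_\omega(\mathcal{L})$ reducing to constants for small $\omega$.

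Statement (2) then drops out of the key identity. If $f\in\textbf{B}^p_\omega(\mathbb{D})$, expanding $\mathcal{L}^m=(-\sum_jD_j^2)^m$ into $d^m$ products of $2m$ first order operators gives $\|\mathcal{L}^mf\|_p\le(d\omega^2)^m\|f\|_p$; this forces $f\in\bigcap_m\mathrm{Dom}(\mathcal{L}^m)$, so by elliptic regularity $f$ is smooth, and (3) lets me pass to the bound $\limsup_m\|\mathcal{L}^mf\|_2^{1/m}\le d\omega^2$, which by the spectral theorem places $f$ in ${\bf E}_{d\omega^2}(\mathcal{L})$. Conversely, for $\varphi\in{\bf E}_{d\omega^2}(\mathcal{L})$ the identity with $k=1$ reads $\sum_j\|D_j\varphi\|_2^2=\langle\mathcal{L}\varphi,\varphi\rangle\le d\omega^2\|\varphi\|_2^2$, hence $\|D_j\varphi\|_2\le\omega\sqrt d\,\|\varphi\|_2$; since $D_j$ preserves ${\bf E}_{d\omega^2}(\mathcal{L})$ I iterate to get $\|D_{i_1}\cdots D_{i_k}\varphi\|_2\le(\omega\sqrt d)^k\|\varphi\|_2$, i.e. $\varphi\in\textbf{B}^2_{\omega\sqrt d}(\mathbb{D})$, and (3) promotes this $L_2$ bound to the stated $L_p$ inclusion.

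For the $L_p$-independence (1) the plan is to observe that, by the first inclusion of (2), every $f\in\textbf{B}^p_\omega(\mathbb{D})$ is a smooth bandlimited function lying in the fixed finite-dimensional space ${\bf E}_{d\omega^2}(\mathcal{L})$, on which all $L_r$ norms are equivalent, and that each derivative $D_{i_1}\cdots D_{i_k}f$ is again bandlimited; one then transfers the defining inequalities between exponents by applying the Nikolskii inequality (3) to these bandlimited derivatives, so that the requirement $\|D_If\|_r\le\omega^{|I|}\|f\|_r$ becomes independent of $r$. I expect this to be the main obstacle: the crude spectral sandwich from (2) loses a factor $\sqrt d$ between its two inclusions, so recovering the \emph{exact} parameter $\omega$ for all $p$ simultaneously needs the sharp forms of the Bernstein and Nikolskii estimates established in \cite{Pes90}, \cite{Pes08}, not the dimensional counting used above.
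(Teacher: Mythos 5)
Your attempt cannot be compared line-by-line with the paper, because the paper gives no proof of this theorem: it is quoted from \cite{Pes90} and \cite{Pes08}, so the benchmark is the method of those references. Much of what you do is sound. Your proof of item (3) is correct and is a genuinely nice self-contained argument from the paper's Section 2: since $H(x)=x^{k}F(x)$ vanishes at $0$, Theorem \ref{nrdglc} applies for all $t>0$, and Corollary \ref{Lalphok} fed into Lemma \ref{younggen} bounds the $L_p\to L_q$ norm of $H(t^{2}\mathcal{L})$ by $Ct^{-s(1/p-1/q)}$, giving $\|\mathcal{L}^{k}\varphi\|_{q}\leq C\omega^{k+\frac{s}{2}(\frac{1}{p}-\frac{1}{q})}\|\varphi\|_{p}$ for $\varphi\in{\bf E}_{\omega}(\mathcal{L})$, which implies the stated (weaker) inequality once $\omega$ is bounded away from zero. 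The identity $\sum_{i_{1},\dots,i_{k}}\|D_{i_{1}}\cdots D_{i_{k}}f\|_{2}^{2}=\langle\mathcal{L}^{k}f,f\rangle$ and the spectral argument are also correct. One repairable slip: in the first inclusion of (2) you invoke (3) to convert $\|\mathcal{L}^{m}f\|_{p}\leq(d\omega^{2})^{m}\|f\|_{p}$ into an $L_2$ bound, but (3) applies only to functions already known to lie in some ${\bf E}_{\omega}(\mathcal{L})$, which is exactly what is being proven --- circular as written. Use instead $L_p\subset L_2$ when $p\geq 2$, and for $p<2$ a graph-norm Sobolev embedding $\|g\|_{2}\leq C(\|g\|_{p}+\|\mathcal{L}^{N/2}g\|_{p})$, whose extra factor $(d\omega^{2})^{N/2}$ is independent of $m$ and vanishes in $\limsup_{m}\|\mathcal{L}^{m}f\|_{2}^{1/m}$.

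The genuine gap is the passage from $L_2$ to $L_p$ in (1) and in the second inclusion of (2). Membership in $\textbf{B}^{p}_{\omega}(\mathbb{D})$ requires $\|D_{i_{1}}\cdots D_{i_{k}}f\|_{p}\leq\omega^{k}\|f\|_{p}$ with constant exactly one, for every $k$. Any Nikolskii or norm-equivalence transfer on the finite-dimensional space ${\bf E}_{d\omega^{2}}(\mathcal{L})$ produces only $\|D_{i_{1}}\cdots D_{i_{k}}f\|_{q}\leq C\omega^{k}\|f\|_{q}$ with some $C>1$; that constant cannot be absorbed into $\omega^{k}$ (already at $k=1$ it is pinned down), nor removed by slightly enlarging $\omega$, so your scheme proves neither (1) nor the inclusion ${\bf E}_{d\omega^{2}}(\mathcal{L})\subset\textbf{B}^{p}_{\omega\sqrt{d}}(\mathbb{D})$. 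The missing idea --- the actual engine of \cite{Pes90}, \cite{Pes08} --- is the Bernstein inequality for one-parameter groups of isometries. Since $D_{j}$ leaves the finite-dimensional space ${\bf E}_{d\omega^{2}}(\mathcal{L})$ invariant, the orbit $z\mapsto e^{zD_{j}}g$ of $g\in{\bf E}_{d\omega^{2}}(\mathcal{L})$ is an entire vector-valued function; its exponential type, computed from $\|D_{j}^{m}g\|_{2}\leq(\omega\sqrt{d})^{m}\|g\|_{2}$, is at most $\omega\sqrt{d}$, and type is unchanged when $\|\cdot\|_{2}$ is replaced by the equivalent norm $\|\cdot\|_{q}$ on this fixed finite-dimensional space; finally the orbit is bounded on the real axis by $\|g\|_{q}$ because the translations $T_{j}(\tau)$ are isometries of every $L_{q}({\bf M})$. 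The classical Bernstein inequality for entire vector-valued functions of exponential type bounded on the real line (equivalently, the Riesz interpolation formula) then yields $\|D_{j}g\|_{q}\leq\omega\sqrt{d}\,\|g\|_{q}$ with constant one, and iteration over $j=i_{1},\dots,i_{k}$ gives the second inclusion of (2); the same argument with type $\omega$, applied to the orbits of $D_{i_{2}}\cdots D_{i_{k}}f$ for $f\in\textbf{B}^{p}_{\omega}(\mathbb{D})$, gives (1). You correctly flagged this as ``the main obstacle'' and deferred to the references, which is honest, but it means that statement (1) and half of statement (2) are asserted rather than proven in your proposal.
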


Every compact Lie
group can be considered to be a closed subgroup of the orthogonal
group $O(\mathbb{R}^{N})$ of some Euclidean space
$\mathbb{R}^{N}$.  It means that  we can identify ${\bf M}=G/K$ with the orbit
of a unit vector $v\in \mathbb{R}^{N}$ under the action of a subgroup
of the orthogonal group $O(\mathbb{R}^{N})$  in some $\mathbb{R}^{N}$.
In this case $K$ will be  the stationary group of $v$. Such an
embedding of ${\bf M}$ into $\mathbb{R}^{N}$ is called {\it equivariant}.

We choose an orthonormal  basis in $\mathbb{R}^{N}$ for which the
first vector is the vector $v$: $e_{1}=v, e_{2},...,e_{N}$. Let $
\textbf{P}_{r}({\bf M}) $ be the space of restrictions to ${\bf M}$ of all
polynomials in $\mathbb{R}^{N}$ of degree $r$. This space is
closed in the norm of $L_{p}({\bf M}), 1\leq p\leq \infty,$ which is
constructed with respect to the $G$-invariant normalized measure on ${\bf M}$ \cite{Pes90}, \cite{Pes08}.

\begin{thm}\label{span} If ${\bf M}$ is embedded into an $R^{N}$ equivariantly, then
$$
\textbf{P}_{r}({\bf M})\subset \textbf{B}_{r}(\mathbb{D})\subset
 {\bf E}_{r^{2}d}(\mathcal{L})\subset
\textbf{B}_{r\sqrt{d}}(\mathbb{D}), \>\>\>d=dim \>G,\>\>\>r\in \mathbb{N},
$$
and
$$
span_{r\in \mathbb{N}}\>\textbf{P}_{r}({\bf M})=span_{\omega\geq 0}
\>\textbf{B}_{\omega}(\mathbb{D})=span_{j\in \mathbb{N}}\>
{\bf E}_{\lambda_{j}}(\mathcal{L}).
$$
\end{thm}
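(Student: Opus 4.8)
The plan is to observe first that the two middle inclusions are not new: taking $\omega = r$ in part (2) of the preceding theorem gives precisely $\textbf{B}_{r}(\mathbb{D}) \subset {\bf E}_{r^{2}d}(\mathcal{L}) \subset \textbf{B}_{r\sqrt{d}}(\mathbb{D})$. Hence the entire chain reduces to the single new inclusion $\textbf{P}_{r}({\bf M}) \subset \textbf{B}_{r}(\mathbb{D})$, and this is where the real work lies.

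To prove $\textbf{P}_{r}({\bf M}) \subset \textbf{B}_{r}(\mathbb{D})$ I would first exploit that the equivariant embedding realizes $G$ as a subgroup of $O(\mathbb{R}^{N})$ acting linearly, with ${\bf M}$ the orbit of $v$. Consequently each generator $X_{j}$ is a skew-symmetric matrix $A_{j}$ on $\mathbb{R}^{N}$, and the coordinate functions satisfy $D_{X_{j}}(x_{i}|_{\bf M}) = \sum_{k}(A_{j})_{ik}\,x_{k}|_{\bf M}$, so $D_{X_{j}}$ preserves the degree-one part; by the Leibniz rule $D_{X_{j}}$ then maps $\textbf{P}_{r}({\bf M})$ into itself. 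The Bernstein bound would come from a one-parameter-subgroup argument: for fixed $x$ the function $t \mapsto P(\exp(tX_{j})\cdot x) = P(e^{tA_{j}}x)$ is a trigonometric polynomial in $t$ whose frequencies are sums of at most $r$ of the purely imaginary eigenvalues of $A_{j}$, so once the embedding and the form $Q$ are normalized so that these eigenvalues have modulus at most one (see \cite{Pes90}, \cite{Pes08}), this trigonometric polynomial has degree at most $r$. The classical Bernstein inequality then yields $|D_{X_{j}}P(x)| = |\tfrac{d}{dt}|_{t=0}P(e^{tA_{j}}x)| \leq r\,\sup_{t}|P(e^{tA_{j}}x)|$; integrating in $x$ and using invariance of the measure under $e^{tA_{j}}$ gives $\|D_{X_{j}}P\|_{p} \leq r\|P\|_{p}$, and iterating over a word $D_{i_{1}}\cdots D_{i_{k}}$ produces the defining inequality of $\textbf{B}_{r}(\mathbb{D})$. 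I expect the main obstacle to be exactly this quantitative step: justifying that the constant is precisely $r$, rather than $r$ times the operator norm of $X_{j}$, requires care about the normalization tying $Q$ to the ambient Euclidean structure, and it is here that the specific construction of the equivariant embedding is essential.

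For the span equalities, the identity $span_{\omega\geq 0}\textbf{B}_{\omega}(\mathbb{D}) = span_{j}\,{\bf E}_{\lambda_{j}}(\mathcal{L})$ is immediate from part (2) of the preceding theorem, since the two families of subspaces are cofinal in one another. The remaining identity $span_{r}\textbf{P}_{r}({\bf M}) = span_{j}\,{\bf E}_{\lambda_{j}}(\mathcal{L})$ I would prove by a projection argument. The inclusion $\subseteq$ follows from the chain just established. For $\supseteq$, note that each $\textbf{P}_{r}({\bf M})$ is finite-dimensional and, since $\mathcal{L} = -\sum_{j}D_{X_{j}}^{2}$ preserves $\textbf{P}_{r}({\bf M})$, it is $\mathcal{L}$-invariant; hence every spectral projection $P_{E}$ of $\mathcal{L}$ onto an eigenspace $E$ maps $\textbf{P}_{r}({\bf M})$ into $\textbf{P}_{r}({\bf M}) \cap E$. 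On the other hand, the restrictions of polynomials separate the points of ${\bf M}$ and contain the constants, so by Stone--Weierstrass $span_{r}\textbf{P}_{r}({\bf M})$ is dense in $L_{2}({\bf M})$; applying the continuous projection $P_{E}$ shows that $P_{E}(span_{r}\textbf{P}_{r}({\bf M}))$ is dense in the finite-dimensional space $E$, hence equals $E$, while at the same time being contained in $span_{r}\textbf{P}_{r}({\bf M})$. Therefore $E \subseteq span_{r}\textbf{P}_{r}({\bf M})$ for every eigenspace $E$, which gives $\supseteq$ and completes the proof.
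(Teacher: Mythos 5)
First, a point of reference: the paper itself contains no proof of Theorem \ref{span} --- it is quoted from \cite{Pes90}, \cite{Pes08} --- so your proposal must be measured against the method of those sources. Your overall architecture matches it: you correctly observe that the two middle inclusions are part (2) of the preceding theorem with $\omega = r$, so that the only new content is $\textbf{P}_{r}({\bf M})\subset \textbf{B}_{r}(\mathbb{D})$, and you attack this through the linearity of the $G$-action on $\mathbb{R}^{N}$, the invariance of $\textbf{P}_{r}({\bf M})$ under each $D_{X_{j}}$, and a Bernstein inequality along one-parameter orbits. Your span argument (cofinality of the families $\textbf{B}_{\omega}(\mathbb{D})$ and ${\bf E}_{\lambda}(\mathcal{L})$ via part (2), plus $\mathcal{L}$-invariance of the finite-dimensional spaces $\textbf{P}_{r}({\bf M})$, Stone--Weierstrass density, and spectral projections) is correct and complete; indeed it is spelled out in more detail than in the sources.

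The one step that fails as written is the passage from the pointwise bound to the $L_{p}$ bound. From $|D_{X_{j}}P(x)| \leq r\,\sup_{t}|P(e^{tA_{j}}x)|$ you cannot conclude $\|D_{X_{j}}P\|_{p}\leq r\|P\|_{p}$ by ``integrating in $x$ and using invariance of the measure'': invariance controls $\int_{\bf M}|P(e^{tA_{j}}x)|^{p}\,dx$ for each \emph{fixed} $t$, but the supremum over $t$ sits inside the integral and is not dominated this way when $p<\infty$. There are two clean repairs, both available to you. (i) Invoke part (1) of the preceding theorem, $\textbf{B}_{\omega}^{p}(\mathbb{D})=\textbf{B}_{\omega}^{\infty}(\mathbb{D})$, which reduces everything to $p=\infty$; there your pointwise estimate closes immediately, since the orbit $e^{tA_{j}}x$ stays inside ${\bf M}$ and hence $\sup_{t}|P(e^{tA_{j}}x)|\leq \|P\|_{\infty}$, with no appeal to measure invariance at all. (ii) Follow the route of \cite{Pes08}, whose title names the tool: the Riesz interpolation formula represents $g'(0)=\sum_{k}c_{k}\,g(t_{k})$ with $\sum_{k}|c_{k}|= r$ for entire functions of exponential type $r$ bounded on the line; applied to $g_{x}(t)=P(e^{tA_{j}}x)$ it gives $D_{X_{j}}P=\sum_{k}c_{k}\,P\circ e^{t_{k}A_{j}}$ pointwise, and now the triangle inequality together with invariance of the measure under each fixed translation $e^{t_{k}A_{j}}$ legitimately yields $\|D_{X_{j}}P\|_{p}\leq r\|P\|_{p}$ for every $1\leq p\leq\infty$. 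Your iteration over words $D_{i_{1}}\cdots D_{i_{k}}$ is then sound, because you established that each $D_{X_{j}}$ maps $\textbf{P}_{r}({\bf M})$ into itself.

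Finally, the normalization concern you flag is real, not a technicality: if the eigenvalues of the skew-symmetric matrices $A_{j}$ (for a $Q$-orthonormal basis) are not bounded by $1$ in modulus, the inclusion $\textbf{P}_{r}({\bf M})\subset\textbf{B}_{r}(\mathbb{D})$ fails --- for instance, embedding $S^{2}$ equivariantly via degree-$\ell$ spherical harmonics makes the linear coordinates restrict to degree-$\ell$ eigenfunctions, which lie only in $\textbf{B}_{c\ell}(\mathbb{D})$. The theorem as stated leans silently on the compatibility between $Q$ and the embedding arranged in the cited construction, and you were right to isolate this as the point where the ``specific construction of the equivariant embedding is essential.''
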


\subsection{Besov spaces on compact homogeneous manifolds}

For the same operators as above $D_{1},...,D_{d},\ d=dim \ G$,  (see section 3) let $T_{1},..., T_{d}$
be the corresponding one-parameter groups of translation along integral
curves of the corresponding vector  fields i.e.
 \begin{equation}
 T_{j}(\tau)f(x)=f(\exp \tau X_{j}\cdot x),\>
 x\in \bold{M}=G/K,\> \tau \in \mathbb{R},\> f\in L_{p}(\bold{M}),\> 1\leq p< \infty,
 \end{equation}
 here $\exp \tau X_{j}\cdot x$ is the integral curve of the vector field
 $X_{j}$ which passes through the point $x\in \bold{M}$.
 The modulus of continuity is introduced as
\begin{equation}
\Omega_{p}^{r}( s, f)= $$ $$\sum_{1\leq j_{1},...,j_{r}\leq
d}\sup_{0\leq\tau_{j_{1}}\leq s}...\sup_{0\leq\tau_{j_{r}}\leq
s}\|
\left(T_{j_{1}}(\tau_{j_{1}})-I\right)...\left(T_{j_{r}}(\tau_{j_{r}})-I\right)f\|_{L_{p}(\bold{M})},\label{M}
\end{equation}
where $d=\dim \>G,\>f\in L_{p}(\bold{M}),1\leq p< \infty,
\ r\in \mathbb{N},  $ and $I$ is the
identity operator in $L_{p}(\bold{M}).$   We consider the space of all functions in $L_{p}(\bold{M})$ for which the
following norm is finite:
\begin{equation}
\|f\|_{L_{p}(\bold{M})}+\left(\int_{0}^{\infty}(s^{-\alpha}\Omega_{p}^{r}(s,
f))^{t} \frac{ds}{s}\right)^{1/t} , 1\leq p,t<\infty,\label{BnormX}
\end{equation}
with the usual modifications for $t=\infty$.
The following theorem is a rather particular case of general results that can be found in \cite{Pes79}, \cite{Pes88a}.
 
 \begin{thm}
 
 If ${\bf M}=G/K$ is a compact homogeneous manifold  the norm of the Besov space $\mathcal{B}^{\alpha}_{p,t}({\bf M}),
 0<\alpha<r\in \mathbb{N},\ 
1\leq p, t< \infty,$ is equivalent to the norm (\ref{BnormX}). Moreover, the norm
(\ref{BnormX}) is equivalent to the norm
\begin{equation}
\|f\|_{W_{p}^{[\alpha]}(\bold{M})}+\sum_{1\leq j_{1},...,j_{[\alpha] }\leq d}
\left(\int_{0}^{\infty}\left(s^{[\alpha]-\alpha}\Omega_{p}^{1}
(s,D_{j_{1}}...D_{j_{[\alpha]}}f)\right)^{t}\frac{ds}{s}\right)^{1/t},d=dim\>G,\label{nonint}
\end{equation}
if $\alpha$ is not integer ($[\alpha]$ is its integer part).  If
$\alpha=k\in \mathbb{N}$ is an integer then the norm
(\ref{BnormX}) is equivalent to the norm (Zygmund condition)
\begin{equation}
\|f\|_{W_{p}^{k-1}(\bold{M})}+ \sum_{1\leq j_{1}, ... ,j_{k-1}\leq d }
\left(\int_{0}^{\infty}\left(s^{-1}\Omega_{p}^{2}(s,
D_{j_{1}}...D_{j_{k-1}}f)\right)
 ^{t}\frac{ds}{s}\right)^{1/t}, d=dim\>G.\label{integer}
\end{equation}
\end{thm}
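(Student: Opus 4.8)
The plan is to deduce every equivalence from a single two-sided comparison between the Peetre $K$-functional of the couple $(L_p({\bf M}), W_p^r({\bf M}))$ and the modulus of continuity $\Omega_p^r$ of (\ref{M}). By the interpolation description (\ref{Besovnorm}), the norm of $\mathcal{B}^{\alpha}_{p,t}({\bf M})$ is comparable to $\|f\|_p + \left(\int_0^\infty (u^{-\alpha/r}K(u,f;L_p,W_p^r))^t\,\frac{du}{u}\right)^{1/t}$, so after the substitution $u = s^r$ the first assertion reduces to the claim that, for $0 < s \le 1$,
\[
K(s^r, f; L_p, W_p^r) \sim \Omega_p^r(s,f),
\]
while for $s \ge 1$ both sides are comparable to $\|f\|_p$ (the tail integral converging since $\alpha > 0$, matching the term $\|f\|_{L_p}$ in (\ref{BnormX})). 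The first ingredient is that each $T_j(\tau)$ is a strongly continuous, measure-preserving one-parameter group on $L_p({\bf M})$, $1\le p<\infty$, with infinitesimal generator $D_j$; together with the norm equivalence recorded at the end of Section \ref{homman}, this identifies $W_p^r$, with equivalent norm, as the common domain of the iterated generators, $\|f\|_{W_p^r}\sim \|f\|_p + \sum_{1\le i_1,\dots,i_r\le d}\|D_{i_1}\cdots D_{i_r}f\|_p$, placing both endpoints of the couple in purely group-theoretic terms.

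For the estimate $\Omega_p^r(s,f)\le C\,K(s^r,f)$ I would split $f = (f-g)+g$ for arbitrary $g\in W_p^r$: each first difference $T_j(\tau)-I$ has $L_p$-norm at most $2$, so $\Omega_p^r(s, f-g)\le 2^r\|f-g\|_p$, while the direct (Jackson-type) bound $\Omega_p^r(s,g)\le C s^r\|g\|_{W_p^r}$ follows by writing $T_j(\tau)-I = \int_0^\tau D_j T_j(v)\,dv$ and iterating, so that an $r$-fold difference becomes an $r$-fold iterated integral of $D_{j_1}\cdots D_{j_r}g$ translated by the flows; contractivity of each $T_j(v)$ then produces the factor $s^r$. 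Taking the infimum over $g$ gives the bound. This direction is routine.

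The hard direction is the reverse bound $K(s^r,f)\le C\,\Omega_p^r(s,f)$, which carries the bulk of the work. Here I would construct an explicit smoothing operator $f\mapsto f_s\in W_p^r$ from iterated Steklov means along the flows, composing over all directions the one-dimensional averages $R_j(s)=\frac1s\int_0^s T_j(\tau)\,d\tau$ and their powers up to order $r$, arranged so that the identity minus the smoother is an average of $r$-th order differences. The two properties to verify are $\|f-f_s\|_p\le C\,\Omega_p^r(s,f)$ and $\|D_{i_1}\cdots D_{i_r}f_s\|_p\le C\,s^{-r}\Omega_p^r(s,f)$; granting these, $K(s^r,f)\le \|f-f_s\|_p + s^r\|f_s\|_{W_p^r}\le C\,\Omega_p^r(s,f)$. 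The genuine obstacle is that when $G$ is non-abelian the fields $X_j$, hence the groups $T_j$, do not commute, so applying a generator $D_i$ to a product of Steklov means in other directions produces commutator terms. The saving feature is that the modulus (\ref{M}) already sums over \emph{all} direction multi-indices, so each commutator, expanded through the Duhamel identity $D_iT_j(\tau)-T_j(\tau)D_i = T_j(\tau)\int_0^\tau T_j(-v)[D_i,D_j]T_j(v)\,dv$ and the structure relations of $\textbf{g}$, is again controlled by $s^{-r}\Omega_p^r(s,f)$. Carrying out this bookkeeping carefully is the crux of the argument.

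Finally, the refinements (\ref{nonint}) and (\ref{integer}) follow from the standard reduction of higher-order moduli to lower-order ones. Using the integral representation of differences once more, for $f$ possessing $m$ derivatives along the flows one obtains the seminorm equivalence
\[
\int_0^\infty \left(s^{-\alpha}\Omega_p^r(s,f)\right)^t\frac{ds}{s} \sim \sum_{1\le j_1,\dots,j_m\le d}\int_0^\infty \left(s^{-(\alpha-m)}\Omega_p^{r-m}(s, D_{j_1}\cdots D_{j_m}f)\right)^t\frac{ds}{s},
\]
which lowers the smoothness index from $\alpha$ to $\alpha-m$ and the modulus order from $r$ to $r-m$ at the cost of $m$ differentiations. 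Taking $m=[\alpha]$ and $r=[\alpha]+1$ when $\alpha$ is not an integer leaves a first-order modulus of index $\alpha-[\alpha]\in(0,1)$, which is exactly (\ref{nonint}), since $-(\alpha-[\alpha])=[\alpha]-\alpha$. When $\alpha=k$ is an integer one can differentiate only $m=k-1$ times while keeping the residual index $\alpha-m=1$ positive; a first-order modulus at index $1$ would capture only a Lipschitz (that is, $W_p^1$) condition and miss the strictly larger Besov space, so one must retain a \emph{second}-order modulus $\Omega_p^2$ of the $(k-1)$-th derivatives at index $1$, giving the weight $s^{-1}$ and the Zygmund form (\ref{integer}). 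Adjoining the base term $\|f\|_{W_p^{[\alpha]}}$, respectively $\|f\|_{W_p^{k-1}}$, which absorbs the lower-order contributions, completes the identification.
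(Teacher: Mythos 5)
The paper gives no proof of this theorem, citing \cite{Pes79} and \cite{Pes88a}, and your proposal reconstructs essentially the machinery of those references: the $K$-functional of $(L_p({\bf M}),W_p^r({\bf M}))$ compared with the group-theoretic modulus (\ref{M}), iterated Steklov means along the flows $T_j$ with Duhamel/structure-constant bookkeeping for the non-commuting directions, and the standard reduction theorems yielding (\ref{nonint}) and the Zygmund form (\ref{integer}). Your only slip is the pointwise claim $K(s^r,f)\sim\Omega_p^r(s,f)$, which fails for constants; the correct equivalence is $K(s^r,f)\sim\Omega_p^r(s,f)+\min(1,s^r)\|f\|_p$, but since $\alpha<r$ the extra term integrates to $C\|f\|_{L_p({\bf M})}$, which both norms already contain, so the final equivalence is unaffected.
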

For $1 \leq p \leq \infty$ we define a measure of the best approximation by functions in ${\bf E}_{\omega}(\mathcal{L})$ as
$$
\mathcal{E}(f,\omega,p)=\inf_{g\in
{\bf E}_{\omega}(\mathcal{L})}\|f-g\|_{L_{p}({\bf M})} \, \, \mbox{for} \, \,  f \in L_p({\bf M}).
$$
The following theorem was proved in \cite{Pes88}, \cite{Pes09}, \cite{gpes}.
\begin{thm}
\label{approxim}
Suppose that $\alpha > 0, 1 \leq p \leq \infty$, and $0 < t< \infty$.  Then the norm of the Besov space  
$\mathcal{ B}^{\alpha }_{p,t}({\bf M})$  is equivalent to the following one
\begin{equation}
\label{errgdpq}
\|f\|_{{\mathcal{B}^{\alpha}_{p,t}({\bf M})}} :=
\|f\|_{L_p({\bf M})} + \left
(\sum_{j=0}^{\infty} \left [2^{\alpha j}{\mathcal E}(f, 2^{2j},p) \right ]^t
\right )^{1/t} < \infty.
\end{equation}
\end{thm}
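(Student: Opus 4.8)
The plan is to realize the Besov norm through the interpolation description (\ref{Besovnorm}) and to reduce the asserted equivalence to a Jackson--Bernstein pair of inequalities relating the best-approximation errors $E_j := \mathcal{E}(f,2^{2j},p)=\mathcal{E}(f,4^j,p)$ to the $K$-functional $K(\tau,f)=K(\tau,f;L_p(\mathbf M),W_p^r(\mathbf M))$. Fix an integer $r>\alpha$ and set $\theta=\alpha/r$, so that by (\ref{Besovnorm}) and the standard dyadic discretization of the $K$-functor (with $\tau=2^{-jr}$, whence $\tau^{-\theta}=2^{\alpha j}$) one has
\[
\|f\|_{\mathcal{B}^{\alpha}_{p,t}} \sim \|f\|_p + \Big(\sum_{j\ge 0}\big(2^{\alpha j}K(2^{-jr},f)\big)^t\Big)^{1/t}.
\]
Since for $g\in\mathbf E_{4^j}(\mathcal L)$ the relevant ``frequency'' is $2^{j}$, the weight $2^{\alpha j}$ matches the one in (\ref{errgdpq}); it therefore suffices to show that $\sum_j(2^{\alpha j}E_j)^t$ and $\sum_j(2^{\alpha j}K(2^{-jr},f))^t$ are comparable, up to the additive term $\|f\|_p$. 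Throughout I take $L=\mathcal L$.

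For the direct (Jackson) half I would first prove that for $g\in W_p^r(\mathbf M)$ and $j\ge 1$,
\[
\mathcal{E}(g,4^j,p) \le \|g-\eta_{j}(L)g\|_p \le \sum_{k\ge j}\|\phi_k(L)g\|_p \le C\,2^{-jr}\|g\|_{W_p^r},
\]
where $\eta_j(L)g\in\mathbf E_{4^j}(\mathcal L)$, the factorization $\phi_k(L)g=2^{-(k-1)r}\psi_k(L)(L^{r/2}g)$ together with Lemma \ref{phijest}(a) in the case $q=p$ bounds each term by $C2^{-kr}\|g\|_{W_p^r}$, and the geometric series is summed. Combining this with the trivial bound $E_j\le\|f\|_p$ and taking the infimum over all splittings $f=f_0+f_1$ with $f_0\in L_p$, $f_1\in W_p^r$, yields $E_j\le C\,K(2^{-jr},f)$; inserting this into the weighted sum gives $\big(\sum_j(2^{\alpha j}E_j)^t\big)^{1/t}\le C\|f\|_{\mathcal{B}^{\alpha}_{p,t}}$.

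The inverse (Bernstein) half is the crux. Here I would first establish the band-limited Bernstein inequality $\|L^{r/2}h\|_p\le C\,\omega^{r/2}\|h\|_p$ for all $h\in\mathbf E_\omega(\mathcal L)$, with $C$ independent of $\omega,h$, valid for every $1\le p\le\infty$ and non-integer $r/2$. This comes straight from the kernel machinery: choosing $\zeta\in\mathcal S(\mathbf R^+)$ equal to $1$ on $[0,1]$ and setting $G(x)=x^{r/2}\zeta(x)$, one has $G\in\mathcal S(\mathbf R^+)$ with $G(0)=0$ and $L^{r/2}h=\omega^{r/2}G(\omega^{-1}L)h$; taking the exponent in Corollary \ref{Lalphok} to be $1$ bounds the $L^1$-in-$y$ norm of the kernel of $G(\omega^{-1}L)$ uniformly, and the generalized Young inequality Lemma \ref{younggen} then gives the claim. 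With Bernstein in hand I would choose near-best approximants $g_j\in\mathbf E_{4^j}(\mathcal L)$ with $\|f-g_j\|_p\le 2E_j$ (so $g_j\to f$ in $L_p$ whenever the right-hand side of (\ref{errgdpq}) is finite), telescope $g_m=g_0+\sum_{j<m}(g_{j+1}-g_j)$, use $\|g_{j+1}-g_j\|_p\le C E_j$ and $\|L^{r/2}(g_{j+1}-g_j)\|_p\le C\,2^{jr}E_j$, and thereby bound
\[
K(2^{-mr},f)\le\|f-g_m\|_p+2^{-mr}\|L^{r/2}g_m\|_p\le C\big(E_m+2^{-mr}\sum_{j<m}2^{jr}E_j\big).
\]
A discrete Hardy inequality then converts the weighted $\ell^t$ norm of the right-hand side into $C\big(\sum_j(2^{\alpha j}E_j)^t\big)^{1/t}$, completing the equivalence. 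The main obstacle is precisely this Bernstein step at non-integer smoothness and at the endpoints $p\in\{1,\infty\}$, which is exactly what the uniform near-diagonal localization of Theorem \ref{nrdglc} and its corollaries are designed to furnish.
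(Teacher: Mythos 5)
The paper itself does not prove Theorem \ref{approxim}: it states the result with citations to \cite{Pes88}, \cite{Pes09}, \cite{gpes}, and the proofs in those references follow essentially the scheme you propose --- realize $\mathcal{B}^{\alpha}_{p,t}({\bf M})$ through the $K$-functor as in (\ref{Besovnorm}), discretize the $K$-functional dyadically, and feed a Jackson inequality (obtained from a Littlewood--Paley-type decomposition as in Lemma \ref{phijest}) and a band-limited Bernstein inequality (obtained from the kernel localization of Theorem \ref{nrdglc}) into the standard equivalence machinery. Your Jackson half, the choice of near-best approximants with the telescoping argument, and the discrete Hardy inequality step are all correct as written.

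There is, however, one concrete error, and it sits exactly at the point you call the crux. If $r/2$ is not an integer, then $G(x)=x^{r/2}\zeta(x)$ is \emph{not} in $\mathcal{S}(\mathbf{R}^{+})$: by the paper's definition, $\mathcal{S}(\mathbf{R}^{+})$ consists of restrictions to the nonnegative axis of Schwartz functions on $\mathbf{R}$, which are in particular $C^{\infty}$ up to the origin, whereas the derivatives of $x^{r/2}$ of order greater than $r/2$ blow up as $x\to 0^{+}$. Consequently Theorem \ref{nrdglc} and Corollaries \ref{kersize}, \ref{Lalphok} cannot be applied to this $G$, and your closing remark that the kernel machinery is designed to furnish the Bernstein step ``at non-integer smoothness'' is backwards: as stated, it furnishes it precisely when $r/2\in\mathbb{N}$. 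The gap is easily repaired inside your own argument, because the realization (\ref{Besovnorm}) allows $r$ to be \emph{any} natural number exceeding $\alpha$: fix $r$ to be an even integer $>\alpha$ from the outset. Then $G$ is a genuine element of $\mathcal{S}(\mathbf{R}^{+})$ with $G(0)=0$, Corollary \ref{kersize} (case (i)) applies for all $t>0$ (take $t=\omega^{-1/2}$), Corollary \ref{Lalphok} and Lemma \ref{younggen} (with the exponent called $\alpha$ there equal to $1$, so $q=p$) give $\|G(\omega^{-1}\mathcal{L})\|_{p\to p}\le C$ uniformly in $\omega$, and hence $\|\mathcal{L}^{r/2}h\|_{p}\le C\omega^{r/2}\|h\|_{p}$ for all $h\in{\bf E}_{\omega}(\mathcal{L})$ and all $1\le p\le\infty$. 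With that single modification your proof is complete and agrees in substance with the proofs in the cited references.
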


\end{document}